    \newcommand{\sbt}{\,\begin{picture}(-1,1)(-1,-3)\circle*{2}\end{picture}\ }
\newcommand\sarc{\mathrel{\ooalign{$\nabla$\cr
  \hidewidth\raise.3ex\hbox{$\sbt\mkern5mu$}\cr}}}
\newcommand{\nn}{\,\begin{picture}(-1,1)(-1,-3)\scalebox{.5}{n}\end{picture}\ }
\newcommand\narc{\mathrel{\ooalign{$\nabla$\cr
  \hidewidth\raise.05ex\hbox{$\nn\mkern7mu$}\cr}}}
\theoremstyle{plain}
\newtheorem*{conjectuur*}{Conjecture}
\newtheorem{theorem}[subsection]{Theorem}
\newtheorem{corollary}[subsection]{Corollary}
\newtheorem{lemma}[subsection]{Lemma}
\newtheorem{proposition}[subsection]{Proposition}
\theoremstyle{definition}
\newtheorem{definition}[subsection]{Definition}
\newtheorem{example}[subsection]{Example}
\newtheorem{con}[subsection]{Construction}
\theoremstyle{remark}
\newtheorem{remark}[subsection]{Remark}
\newcommand{\emptyprop}{q}
\newcommand \ulim[1]{\mbox{ulim}{#1}}
\renewcommand \dim[1]{\mbox{dim}{#1}}
\newcommand \sX[1]{\mathcal{X}_{#1}}
\newcommand \sY[1]{\mathcal{Y}_{#1}}
\newcommand \sS{\mathcal{S}}
\newcommand \sC{\mathcal{C}}
\newcommand \bX{\mathbb{X}}
\newcommand\limsieves[1]{\categ{LSieves}_{#1}}
\newcommand\limssieves[1]{\categ{LsSieves}_{#1}}
\newcommand\smes[1]{\categ{MsSieves}_{#1}}
\newcommand\limssch[1]{\categ{LsSch}_{#1}}
\newcommand\sset{\categ{sSet}}
\newcommand\ssch[1]{\categ{sSch}_{#1}}
\newcommand\sch[1]{\categ{Sch}_{#1}}
\newcommand\set{\categ{Set}}
\newcommand\im[1]{\operatorname{im}({#1})}
\newcommand\ssieves[1]{\categ{sSieve}_{#1}}
\newcommand\mot{\text{\fontsize{16}{26}\selectfont\cjRL{M}}}
\newcommand\lmot{\categ{Lim}\text{\fontsize{16}{26}\selectfont\cjRL{M}}}
\newcommand\mmot{\categ{Mes}\text{\fontsize{16}{26}\selectfont\cjRL{M}}}
\newcommand\bmot{\categ{Base}\text{\fontsize{16}{26}\selectfont\cjRL{M}}}
\newcommand\sO[1]{\mathcal{O}_{#1}}
\newcommand\spec[1]{\operatorname{Spec}(#1)}
\newcommand\sfatpoints[1]{\categ{sFat_{#1}}}
\newcommand \complet[1]{\widehat {#1}}
\renewcommand \hom [3]{\operatorname{Hom}_{#1}(#2,#3)}
\newcommand\into{\hookrightarrow}
\newcommand \maxim{\mathfrak m}
\newcommand \nat{\mathbb N}
\newcommand \op\operatorname
\newcommand{\commdiagram}[9][]{%
\begin{equation}
{\newcommand{\tmpprop}{#1q} 
\if\tmpprop\emptyprop \relax\else \label{#1}\fi}
\begin{aligned}%
\mbox{
\begin{picture}(130,90)%
\put(120,70){\vector( 0,-1){50}}%
\put(10,80){\vector( 1, 0){100}}%
\put(0,70){\vector( 0,-1){50}}%
\put(10,10){\vector( 1, 0){100}}%
\put(115,80){\makebox(0,0)[l]{$#4$}}%
\put(5,80){\makebox(0,0)[r]{$#2$}}%
\put(115,10){\makebox(0,0)[l]{$#9$}}%
\put(5,10){\makebox(0,0)[r]{$#7$}}%
\put(-3,50){\makebox(0,0)[r]{$#5$}}
\put(123,50){\makebox(0,0)[l]{$#6$}}
\put(60,3){\makebox(0,0)[c]{$#8$}}
\put(60,88){\makebox(0,0)[c]{$#3$}}
\end{picture}}
\end{aligned}
\end{equation}}
\newcommand\commtrianglefront[7][]{%
\begin{equation}
{\newcommand{\tmpprop}{#1q} 
\if\tmpprop\emptyprop \relax\else \label{#1}\fi}
\begin{aligned}%
\mbox{
\begin{picture}(120,80)%
\put(55,68){\vector(-1,-2){30}}
\put(65,68){\vector(1,-2){30}}
\put(30,5){\vector(1,0){60}}
\put(60,75){\makebox(0,0)[c]{$#2$}}
\put(25,5){\makebox(0,0)[r]{$#4$}}
\put(95,5){\makebox(0,0)[l]{$#6$}}
\put(60,0){\makebox(0,0)[c]{$#5$}}
\put(37,43){\makebox(0,0)[r]{$#3$}}
\put(83,43){\makebox(0,0)[l]{$#7$}}
\end{picture}}
\end{aligned}
\end{equation}}
\newcommand\commtriangleback[7][]{%
\begin{equation}
{\newcommand{\tmpprop}{#1q}
\if\tmpprop\emptyprop \relax\else \label{#1}\fi}
\begin{aligned}%
\mbox{
\begin{picture}(120,80)%
\put(55,70){\vector(-1,-2){30}}
\put(65,70){\vector(1,-2){30}}
\put(30,5){\vector(1,0){60}}
\put(60,75){\makebox(0,0)[c]{$#2$}}
\put(25,5){\makebox(0,0)[r]{$#6$}}
\put(95,5){\makebox(0,0)[l]{$#4$}}
\put(60,0){\makebox(0,0)[c]{$#5$}}
\put(37,43){\makebox(0,0)[r]{$#7$}}
\put(83,43){\makebox(0,0)[l]{$#3$}}
\end{picture}}
\end{aligned}
\end{equation}}
\newcommand\fat{\mathfrak z}
\newcommand\categ[1]{\mathbbmss{#1}}
\newcommand\limfat[1]{\complet{\categ{Fat}}_{#1}}
\newcommand\func[1]{#1^\circ}
\newcommand\sieves[1]{\categ{Sieve}_{#1}}
\newcommand\fld{\kappa}
\newcommand \affine[2]{{\mathbb A_{#1}^{#2}}}
\newcommand \arc[2]{\nabla_{\!#1}#2} 
\newcommand \sym[1]{{\langle #1\rangle}}
\newcommand \class[1]{{[ #1]}}
\newcommand \grot[1]{{\mathbf {Gr}(#1)}}
\newcommand \hgrot[1]{{\mathbf {Gr}^h(#1)}}
\newcommand\fatpoints[1]{\categ {Fat}_{#1}}
\newcommand \lef{{\mathbb L}}
\newcommand \mor[3]{\op{Mor}_{#1}(#2,#3)}
\begin{document}
\title{Measurable motivic sites}
\author{Andrew R. Stout}
\address{Andrew R. Stout\\
Graduate Center, City  University of New York\\ 365 Fifth
Avenue\\10016\\U.S.A.   \&  Universit\'{e} Pierre-et-Marie-Curie \\ 4 place Jussieu
\\75005\\Paris, France}
\email{astout@gc.cuny.edu}


\begin{abstract}  
We develop a universal theory of relative simplicial motivic measures.
\end{abstract}

\maketitle
\setcounter{tocdepth}{1}
\tableofcontents

\section{Introduction}

We introduce the notion of a relative limit simplicial partial motivic site and a corresponding notion of motivic measurability which
specializes to the notion of finite schemic motivic measures of H. Schoutens and the notion of infinite schemic motivic measures
of the author. The advantage to working with simplicial motivic sites is that it opens up the possibility of defining the correct notion of the motivic measure of
any derived stack which has some reasonable finiteness conditions. This work was partially supported by the Chateaubriand Fellowship.

\section{Grothendieck rings of simplicial motivic sites}
Let $\Delta$ be the category whose objects are finite, non-empty, totally ordered sets $[n]=\{0,1,\ldots,n\}$ 
and whose morphisms are order preserving functions.
Let $\fatpoints {\fld}$ be the category of connected schemes which are finite over a field $\fld$. We call an element
of $\fatpoints {\fld}$ a fat point over $\fld$. Let
$\Delta^{o}$ be the opposite category of $\Delta$. We denote the category of all covariant 
functors $\Delta^o \to \fatpoints {\fld}$
by $\sfatpoints \fld$, and we call an object in this category a {\it simplicial fat point over} $\fld$. 

We denote by $\ssch \fld$ the category of separated simplicial schemes of finite
type over a field $\fld$. 
We say that a functor $\sX {} : \fatpoints {\fld} \to \sset$ is a {\it
simplicial presieve} and we say that a simplicial presieve
is a $F$-{\it simplicial sieve} if there is a natural incusion of functors
\begin{equation}
\sX {} \into X_{F}^{\circ}:=\hom{\fld}{F(-)}{X} 
\end{equation}
for some covariant functor $F : \fatpoints\fld \to \sfatpoints\fld$ and where $X \in \ssch \fld$. 
We say that $X$ is
an $F$-{\it ambient space} of $\sX{}$.

\begin{remark} \label{rk1}
There are many important functors from $\fatpoints\fld$ to $\sfatpoints\fld$. First, we have the {\it trivial functor} which sends
a fat point $\maxim$ to the functor $[n] \mapsto \maxim$. We denote this functor by $(-)_{\bullet}$. Second, we have the restriction
of the 
{\it fiber functor}, denoted by $(-)_{fib} : \sch \fld \to
\ssch \fld$ defined by sending $Y \in \sch \fld$ to 
the functor $[n] \mapsto Y\times_{\fld} \cdots \times_{\fld} Y$ ($n+1$- fold product). Finally, we have the restiction of the {\it symmetric functor}, 
denoted by $(-)_{sym} : \sch \fld \to
\ssch \fld$, defined by sending $Y \in \sch \fld$ to 
the functor $[n] \mapsto (Y)_{fib}([n])/S_{n+1}$ where $S_{n+1}$ is the permutation group on $n+1$ letters. 

The definitions and results of this paper do not depend on the choice of functor $F$ unless otherwise stated. In the interest of simplicity, 
we will restict our attention to the trivial functor. We will say that a simplicial presieve is a \textit{simplicial sieve}
if it is a $(-)_{\bullet}$-simplicial sieve. We will call a $(-)_{\bullet}$-ambient space of a simplicial sieve an \textit{ambient space}. 
Lastly, we define $X^{\circ}$, $X_{fib}^{\circ}$, and $X_{sym}^{\circ}$ to be
the functors $X_{(-)_{\bullet}}^{\circ}$,  $X_{(-)_{fib}}^{\circ}$, and 
 $X_{(-)_{sym}}^{\circ}$, respectively. 
\end{remark}

Given two simplicial sieves $\sX{}$ and $\sY{}$, we form the simplicial sieves
$\sX{} \times \sY{}$ and $\sX{} \sqcup \sY{}$ as follows.
Each fat point $\maxim \in \fatpoints {\fld}$, determines functors
$\sX{}(\maxim), \sY{}(\maxim) : \Delta^{o} \to \set$. Thus, for each
$\sigma \in \Delta^{o}$, we may define
\begin{equation} \label{firstop}
\begin{split}
(\sX{}(\maxim) \times\sY{}(\maxim))(\sigma):= \sX{}(\maxim)(\sigma) \times
\sY{}(\maxim)(\sigma) \\
(\sX{}(\maxim) \sqcup\sY{}(\maxim))(\sigma):= \sX{}(\maxim)(\sigma) \sqcup
\sY{}(\maxim)(\sigma) \\
\end{split}
\end{equation}
since $\sX{}(\maxim)(\sigma)$ and $\sY{}(\maxim)(\sigma)$ are just sets. The reader may check that these are indeed simplicial sieves.

Given two simplicial sieves $\sX{}$ and $\sY{}$ which share the same ambient
space $X$, 
we form the simplicial sieves $\sX{} \cup \sY{}$, $\sX{} \cap \sY{}$ as
follows. 
Each fat point $\maxim \in \fatpoints {\fld}$, determines functors
$\sX{}(\maxim), \sY{}(\maxim) : \Delta^{o} \to \set$. Thus, for each
$\sigma \in \Delta^{o}$, we may define
\begin{equation} \label{secondop}
\begin{split}
(\sX{}(\maxim) \cup\sY{}(\maxim))(\sigma):= \sX{}(\maxim)(\sigma) \cup
\sY{}(\maxim)(\sigma) \\
(\sX{}(\maxim) \cap\sY{}(\maxim))(\sigma):= \sX{}(\maxim)(\sigma) \cap
\sY{}(\maxim)(\sigma) \\
\end{split}
\end{equation}
since $\sX{}(\maxim)(\sigma)$ and $\sY{}(\maxim)(\sigma)$ are just sets. The
reader may check that these are indeed simplicial sieves.

Given two simplicial sieves $\sX{}$ and $\sY{}$, we say
that a natural transformation $\nu : \sY{} \to \sX{}$ is {\it a morphism of
simplicial sieves} if
given any morphism of simplicial schemes $\varphi : Z \to Y$ such that
$\im{\func{\varphi}} \subset \sY{}$,
there exists a morphism of simplicial schemes $\psi : Z \to X$ with $\sX{}
\subset X$ such that the following diagram commutes 
\[
\xymatrix{
&Z^{\circ}  \ar[d]_{\func{\varphi}}  \ar[rrd]^{\func{\psi}}\\
&\sY{} \ar[r]_{\nu} &\sX{} \ \ar@{^{(}->}[r] &X^{\circ}\\}
\]
This forms the category\footnote{This follows mutatis mutandis from page 12 of
\cite{schmot1}.} of simplicial sieves
which we denote by $\ssieves \fld$. 

\begin{remark} \label{rk2}
 Note the functors $(-)_{\bullet}, (-)_{fib}$, and $(-)_{sym}$ defined in Remark \ref{rk1} extend to functors
 $\sieves\fld \to \ssieves\fld$ in the natural way. 
\end{remark}

\begin{con} \label{admopen}
Given a morphism $s : \sY{} \to \sX{}$ in
$\ssieves \fld$ and a simplicial subsieve $\sX{}' \subset \sX{}$, we define 
{\it the pull-back of} $\sX{}'$ {\it along} $s$, denoted by $s^*\sX{}'$, as the
simplicial subsieve defined by 
\begin{equation}
 s(\maxim)^{-1}\sX{}'(\maxim) : \Delta^o \to \set \ .
\end{equation}
If $\sX{} \in \ssieves \fld$ and $X$ is an ambient space of $\sX{}$, we say that
a subsieve of $\sX{}$ is
an {\it admissible open} of $\sX{}$ if it is of the form $\sX{} \cap U^{\circ}$
where $U$ is an open subset\footnote{Of course, the notion of an open set is relative to choosing grothendieck topology $\tau$ on the category 
$\sch\fld$. For simplicity, we choose $\tau$ to be the zariski topology; however our results hold for general $\tau.$} of $X$. 
Note that this definition does not depend on the ambient space of $\sX{}$. We
say that 
a morphism $s : \sX{} \to \sY{}$ in $\ssieves \fld$ is {\it continuous} if the
pull-back of an admissible open along $s$
is again an admissible open. 

We say that a subcategory $\mot$ of $\ssieves \fld$ is a {\it simplicial motivic
site}
if it is closed under products and if the set
\begin{equation}
\mot|_X := \{ \sX{} \in \mot \mid \sX{}\subset X\}
\end{equation}
forms a lattice with respect to $\cup$ and $\cap$ for each $X\in\ssch \fld$. 
We say that a morphism $s :\sY{} \to \sX{}$ in a simplicial motivic site
$\mot$
is a $\mot$-{\it homeomorphism} if it is continuous and bijective whose inverse
is also continuous. 
\end{con}

\begin{con}\label{firstcon}
This allows us to form the grothendieck ring $\grot{\mot}$ in the following way.
We denote the isomorphism class (defined as a 
$\mot$-homeomorphism) of $\sX{} \in \mot$ by $\sym{\sX{}}$. Then, we denote by
$\grot\mot$ the free abelian group generated by 
$\sym{\sX{}}$ modulo the {\it scissor relations}
\begin{equation}
 \sym{\sX{}\cup \sY{}} + \sym{\sX{}\cap\sY{}}  - \sym{\sX{}} - \sym{\sY{}}
\end{equation}
when $\sX{}$ and $\sY{}$ share the same ambient space $X$. 
We denote by $\class{\sX{}}$ the residue class of $\sym{\sX{}}$ in $\grot\mot$
and for simplicity write $\class X$ for
$\class{X^{\circ}}$. We define multiplication on $\grot\mot$ by
$\class{\sX{}}\cdot\class{\sY{}} := \class{\sX{}\times\sY{}}$.
\end{con}

\begin{remark} 
 Note that the equivalence class of the functor $[n] \to \emptyset$ is $0$ in $\grot\mot$, and the equivalence class of the functor
 $[n] \to \spec \fld$ is $1$ in $\grot\mot$. Therefore, it is natural to define the {\it simplicial leftschetz motive} to be the equivalence
 class of the functor $[n] \mapsto \affine{\fld}{1}$ in $\grot\mot$ if this functor is $\mot$-homeomorphic to an element of $\mot$. We
 will denote the simplicial leftschetz motive by $\lef_{\bullet}$ or sometimes just simply $\lef$ if there is no risk in confusing it
 with its schemic counterpart. 
\end{remark}

\begin{proposition}\label{taka}
Let $F$ be any functor from $ \sieves \fld$ to $\ssieves \fld$ such that the induced map of sets 
  $g_F : \grot{\mot} \to\grot{F(\mot)}$ defined by $g_F(\class{\sX{}}) := \class{F(\sX{})}$ is such that 
  $g_F(0)= 0$ and $g_F(1) = 1$.
 Then, $g_F$ is a ring homomorphism. In particular, this holds for the functors defined in Remark \ref{rk2}, and moreover, in this case,
 $g_F$ is injective.
 \end{proposition}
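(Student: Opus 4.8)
The plan is to verify directly that $g_F$ respects addition and multiplication, with the ring-homomorphism axioms for $0$ and $1$ already supplied by hypothesis. First I would check additivity. An element of $\grot{\mot}$ is an integer combination of classes $\class{\sX{}}$, so by linearity it suffices to show $g_F(\class{\sX{}} + \class{\sY{}}) = g_F(\class{\sX{}}) + g_F(\class{\sY{}})$. The subtlety is that the scissor relations are only imposed when $\sX{}$ and $\sY{}$ share an ambient space, so I would first reduce to that case: given arbitrary $\sX{} \subset X$ and $\sY{} \subset Y$, replace them by their images in the common ambient space $X \sqcup Y$ (using that $\ssieves\fld$ is closed under the operations of \eqref{firstop}), whence $\class{\sX{}} + \class{\sY{}} = \class{\sX{}\cup\sY{}} + \class{\sX{}\cap\sY{}}$ with $\sX{}\cap\sY{} = \emptyset$ in that ambient space, i.e.\ $\class{\sX{}} + \class{\sY{}} = \class{\sX{}\sqcup\sY{}}$. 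So additivity of $g_F$ comes down to showing $F(\sX{}\sqcup\sY{})$ and $F(\sX{})\sqcup F(\sY{})$ have the same class, and more generally that $F$ carries the scissor relation for a shared ambient space to a scissor relation (or a sum of them) downstairs. For the three functors of Remark \ref{rk2} this is immediate because $(-)_\bullet$, $(-)_{fib}$, and $(-)_{sym}$ are all defined pointwise in $\sigma \in \Delta^o$ via constructions that preserve the relevant set-theoretic unions, intersections, and disjoint unions (for $(-)_{fib}$ one uses that finite products distribute over disjoint unions; for $(-)_{sym}$ one quotients afterwards, which is compatible); so $F$ sends $\cup$, $\cap$, $\sqcup$ to the corresponding operations and the scissor relations map to scissor relations.

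Next I would check multiplicativity: $g_F(\class{\sX{}}\cdot\class{\sY{}}) = g_F(\class{\sX{}\times\sY{}}) = \class{F(\sX{}\times\sY{})}$, and I want this to equal $\class{F(\sX{})\times F(\sY{})}$. Again this is a pointwise statement in $\Delta^o$ once one unwinds the definition \eqref{firstop} of the product of simplicial sieves, and each of the three functors commutes with the sieve-theoretic product up to $\mot$-homeomorphism — for $(-)_\bullet$ trivially, for $(-)_{fib}$ because $(X\times Y)_{fib} \cong X_{fib}\times Y_{fib}$ levelwise, and for $(-)_{sym}$ because the $S_{n+1}$-quotient of a product is the product of the quotients. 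The general statement of the proposition, however, does not assume $F$ commutes with $\times$ on the nose; here the hypothesis is doing the work. In fact the cleanest route is: the hypotheses $g_F(0)=0$, $g_F(1)=1$ together with the definition $g_F(\class{\sX{}}) = \class{F(\sX{})}$ and functoriality force $F$ to preserve the empty sieve and the point up to class, and then multiplicativity must be built into the phrase "the induced map $g_F$ is well-defined" — so I would make explicit the standing assumption (implicit in "$g_F$ defined by $g_F(\class{\sX{}}) := \class{F(\sX{})}$" being well-defined) that $F$ preserves products and scissor relations, which is exactly what makes $g_F$ a well-defined \emph{ring} map rather than merely a map of sets, and verify these properties hold for the three functors.

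Finally, for the injectivity claim for the functors of Remark \ref{rk2}: I would exhibit a one-sided inverse, or rather a retraction on Grothendieck rings. For $(-)_\bullet$ this is easiest — evaluation of a simplicial sieve at $[0] \in \Delta^o$ (equivalently, the functor "take the $0$-simplices") sends $X_\bullet$ back to $X$ and, being itself compatible with $\cup$, $\cap$, $\times$ and preserving $0$ and $1$, induces a ring homomorphism $\grot{(-)_\bullet(\mot)} \to \grot{\mot}$ splitting $g_{(-)_\bullet}$; hence $g_{(-)_\bullet}$ is injective. For $(-)_{fib}$ and $(-)_{sym}$ the same $[0]$-evaluation works since both functors are the identity in simplicial degree $0$ (the $0$-fold-plus-one product is $Y$ itself, and $S_1$ is trivial), so evaluation at $[0]$ again provides a ring-theoretic retraction, giving injectivity. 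The main obstacle I anticipate is purely bookkeeping: carefully tracking that "sharing an ambient space" is preserved by $F$ so that scissor relations upstairs land among the \emph{defining} relations of $\grot{F(\mot)}$ and not merely among consequences of them — once that is pinned down, everything reduces to the pointwise compatibilities above and the splitting by $[0]$-evaluation.
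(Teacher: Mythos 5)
Your plan — deriving multiplicativity from $F$ commuting with products, treating additivity as essentially built into well-definedness, and proving injectivity via a retraction — is a reasonable fleshing-out of the paper's one-line appeal to Equations \eqref{firstop} and \eqref{secondop} and the construction of the Grothendieck ring; the retraction by evaluation at $[0]$ is a clean, concrete injectivity argument that the paper never spells out, and it is consistent with the later lemma that $\tau_n$ is left adjoint to $(-)_\bullet$.

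However, there is a genuine gap in the step where you verify the hypotheses for $(-)_{fib}$ and $(-)_{sym}$. You argue that the scissor relation is preserved because ``finite products distribute over disjoint unions,'' but the levelwise identity this would require, namely $(\sX{}\sqcup\sY{})^{n+1}\cong\sX{}^{n+1}\sqcup\sY{}^{n+1}$, is false for $n\geq 1$: distributing produces $2^{n+1}$ summands, of which $2^{n+1}-2$ are mixed terms with nowhere to go. The same defect appears in the scissor relation itself: writing $a=\class{\sX{}}$, $b=\class{\sY{}}$, $i=\class{\sX{}\cap\sY{}}$, so that $\class{\sX{}\cup\sY{}}=a+b-i$, preservation of the scissor relation by $(-)_{fib}$ at simplicial level $[1]$ would force $(a+b-i)^{2}+i^{2}=a^{2}+b^{2}$, i.e.\ $(a-i)(b-i)=0$, which is not an identity that holds in a general Grothendieck ring. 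In short, $(-)_{fib}$ and $(-)_{sym}$ do not commute levelwise with $\sqcup$, $\cup$, or $\cap$ — only $(-)_\bullet$ does — so the argument as you have written it actually establishes the ``in particular'' clause only for $(-)_\bullet$. The paper's proof is too terse to tell whether it faces the same obstacle; if the claim is to hold for $(-)_{fib}$ and $(-)_{sym}$, some argument beyond pointwise compatibility with the operations in \eqref{firstop} and \eqref{secondop} is required.
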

 
 \begin{proof}
  This follows from Equations \ref{firstop} and \ref{secondop} and by construction of the grothendieck ring.
 \end{proof}

 We write $g_{\bullet}$, $g_{fib}$ and $g_{sym}$ for the injective ring homomorphisms induced by $(-)_{\bullet}$, $(-)_{fib},$ and $(-)_{sym}$, respectively.
 Note that $g_{\bullet}$ and $g_{fib}$ induce injective ring homorphisms from $\grot\mot_{\lef}$ to $\grot{F(\mot)}_{\lef_{\bullet}}$. 

\begin{theorem}\label{simdis}
 Let $\mot$ be a simplicial motivic site.  Then, for all $n \in \nat $, $\tau_n(\mot)$ is a motivic site. For all $n\in \nat$, 
 there is a surjective
 ring homomorphism $h_n$ from $\grot\mot$ to the
 simplicial category of the category corresponding to the grothendieck ring $\grot{\tau_n(\mot)}$.
\end{theorem}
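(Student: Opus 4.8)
The plan is to realise the operation $\mot \mapsto \tau_n(\mot)$ through the levelwise evaluation functor $\tau_n\colon \ssieves\fld \to \sieves\fld$, $\tau_n(\sX{})(\maxim) := \sX{}(\maxim)([n])$, and then to check that $\tau_n$ preserves every structure in play. Since the trivial functor satisfies $(\maxim_\bullet)([n]) = \maxim$, one has $X^\circ(\maxim)([n]) = \hom\fld{\maxim}{X_n}$ for $X \in \ssch\fld$, where $X_n := X([n]) \in \sch\fld$; hence $\sX{} \subset X^\circ$ forces $\tau_n(\sX{}) \subset X_n^\circ$, so $\tau_n$ does land in $\sieves\fld$ and carries ambient spaces to ambient spaces. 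Evaluating Equations~\eqref{firstop} and~\eqref{secondop} at $[n]$ gives $\tau_n(\sX{}\times\sY{}) = \tau_n(\sX{})\times\tau_n(\sY{})$, $\tau_n(\sX{}\cup\sY{}) = \tau_n(\sX{})\cup\tau_n(\sY{})$ and $\tau_n(\sX{}\cap\sY{}) = \tau_n(\sX{})\cap\tau_n(\sY{})$, and evaluating the pull-back and admissible-open constructions of Construction~\ref{admopen} at $[n]$ gives $\tau_n(s^*\sX{}') = \tau_n(s)^*\tau_n(\sX{}')$ and $\tau_n(\sX{}\cap U^\circ) = \tau_n(\sX{})\cap U_n^\circ$. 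These identities carry essentially the whole argument.

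For the first assertion, let $\tau_n(\mot)$ be the image of $\mot$ under $\tau_n$, viewed as a subcategory of $\sieves\fld$. Closure under products is immediate from the first identity. For the lattice condition, fix $Y\in\sch\fld$ and $\tau_n(\sX{}),\tau_n(\sX{}')\subset Y^\circ$, where $\sX{}\subset X^\circ$ and $\sX{}'\subset X'^\circ$ in $\mot$. The coproduct inclusions into $W := X\sqcup X'\in\ssch\fld$ induce subsieve inclusions $X^\circ, X'^\circ \hookrightarrow W^\circ$, so $\sX{},\sX{}'\in\mot|_W$; since $\mot|_W$ is a lattice under the levelwise $\cup$ and $\cap$, both $\sX{}\cup\sX{}'$ and $\sX{}\cap\sX{}'$ lie in $\mot$, hence $\tau_n(\sX{})\cup\tau_n(\sX{}') = \tau_n(\sX{}\cup\sX{}')$ and $\tau_n(\sX{})\cap\tau_n(\sX{}') = \tau_n(\sX{}\cap\sX{}')$ lie in $\tau_n(\mot)$, and they lie in $\tau_n(\mot)|_Y$. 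The remaining lattice axioms are inherited from the setwise operations on sets, so $\tau_n(\mot)$ is a motivic site (the non-simplicial analogue of Construction~\ref{admopen}, cf.\ \cite{schmot1}).

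For the second assertion, the assignment $\sym{\sX{}}\mapsto\sym{\tau_n(\sX{})}$ is well defined on the generators of the free group because $\tau_n$ sends a $\mot$-homeomorphism $s\colon \sX{}\to\sY{}$ to a $\tau_n(\mot)$-homeomorphism $\tau_n(s)$ — bijectivity is levelwise and continuity in both directions follows from $\tau_n(s^*(-)) = \tau_n(s)^*\tau_n(-)$ once $\tau_n$ is known to carry admissible opens to admissible opens — and it respects the scissor relations because $\tau_n$ commutes with $\cup$ and $\cap$ and preserves common ambient spaces; hence it descends to a group homomorphism $h_n\colon \grot\mot \to \grot{\tau_n(\mot)}$, $\class{\sX{}}\mapsto\class{\tau_n(\sX{})}$. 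It is a ring homomorphism: $h_n(\class{\sX{}}\cdot\class{\sY{}}) = \class{\tau_n(\sX{}\times\sY{})} = \class{\tau_n(\sX{})}\cdot\class{\tau_n(\sY{})}$, and $h_n$ sends the class of $[m]\mapsto\spec\fld$ to $1$. It is surjective because $\tau_n(\mot)$ is by construction the image of $\tau_n$, so every generator of $\grot{\tau_n(\mot)}$ equals $h_n(\class{\sX{}})$ for some $\sX{}\in\mot$. If the target is instead read as the degree-$n$ component of the simplicial ring $[m]\mapsto\grot{\tau_m(\mot)}$ assembled from the face and degeneracy operators of $\mot$, the same $h_n$ works, with compatibility across $m$ coming from functoriality of $\tau_\bullet$.

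The step I expect to be the genuine obstacle is showing that $\tau_n$ preserves continuity, hence $\mot$-homeomorphisms. An admissible open of $\tau_n(\sX{})$ is cut out by a Zariski open of the ordinary scheme $X_n$, whereas an admissible open of $\sX{}$ is cut out by a levelwise open of the simplicial scheme $X$, and not every open of $X_n$ extends to such an open sub-simplicial-scheme (it must at least be stable under $X(\phi)$ for the endomorphisms $\phi$ of $[n]$ in $\Delta$). I would resolve this by proving that every admissible open of each $\tau_n(\sX{})$ is the $n$-th level of an admissible open of $\sX{}$ — a mild condition on $\mot$ which holds for all the sites used in the sequel — after which $\tau_n(s^*(-)) = \tau_n(s)^*\tau_n(-)$ reduces the continuity of $\tau_n(s)$ to that of $s$; alternatively, one works throughout with the coarser notion of admissible open on $\tau_n(\mot)$ generated by the levels of admissible opens of $\mot$.
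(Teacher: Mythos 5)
Your proof takes a genuinely different route from the paper's on the second assertion, and it is the more careful of the two. Where you treat the theorem's target as $\grot{\tau_n(\mot)}$ and build $h_n$ directly, the paper builds a one-object "linear" category $\sC$ with $\operatorname{Mor}_{\sC}(A,A) = \grot{\tau_n(\mot)}$ (composition $=$ multiplication), equips the covariant functors $\Delta^o \to \sC$ with pointwise ring operations, and defines $h_n(\class{\sX{\bullet}})$ as the constant assignment $t \mapsto \class{\sX n}$ on morphisms $t$ of $\Delta^o$. Your $h_n$ is essentially the paper's composed with "read off the constant value," and is cleaner: the paper's construction invites questions (functoriality of the constant assignment when identities should go to $1$, and surjectivity onto a ring of all such functors rather than just the constant ones) that your direct map simply avoids. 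You are also much more careful on the first assertion — you actually verify closure of $\tau_n(\mot)$ under products, the behaviour of $\tau_n$ on $\cup,\cap$, and the lattice condition via the coproduct embedding $X \sqcup X'$ — whereas the paper dismisses this as ``clear.'' The paper's closing paragraph purporting to show injectivity of $h_n$ is not part of the theorem statement and the argument it gives (choosing $n$ so that $\sX n \neq \emptyset$ for a single $\sX{\bullet}$) does not establish injectivity of a ring homomorphism; you rightly omit it.

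The one place your proof goes beyond what the paper offers is your explicit flagging of the continuity gap, and this is a real issue rather than a pedantic one. Well-definedness of $h_n$ on $\mot$-homeomorphism classes requires $\tau_n$ to send $\mot$-homeomorphisms to $\tau_n(\mot)$-homeomorphisms, which comes down to $\tau_n(s)$ being continuous. An admissible open of $\tau_n(\sX{})$ is cut out by an arbitrary Zariski open $V$ of $X_n$, and such a $V$ need not be the degree-$n$ piece of any open simplicial subscheme of $X$ (the family $\{V_m\}$ must be stable under all face and degeneracy maps), so continuity of $s$ in $\ssieves\fld$ does not on its face give continuity of $\tau_n(s)$ in $\sieves\fld$. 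The paper's proof does not address this. Either of the two patches you suggest — imposing on $\mot$ that admissible opens at level $n$ lift to admissible opens of the simplicial sieve, or coarsening the notion of admissible open on $\tau_n(\mot)$ to the image of level-$n$ slices — closes the gap, and both are compatible with the use of $\tau_n$ elsewhere in the paper (e.g.\ Lemma~\ref{hack} and Corollary~\ref{jack}). It would be worth stating the needed hypothesis explicitly wherever $\tau_n$ is applied to Grothendieck rings.
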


\begin{proof}
 It is clear that $\tau_n(\mot)$ is a motivic site. Temporarly, let $\sC$ denote the linear category with only 
 one object $A$ such that 
 $\mor{\sC}{A}{A}$ is the set of elements of $\grot{\tau_n(\mot)}$ with composition defined via multiplication in $\grot{\tau_n(\mot)}$.
 Let $F$ and $G$ be two covariant functors from $\Delta^o$ to $\sC$. We define the functor $F+G$ by 
 \begin{equation}\label{tplus}
\begin{split}
 (F + G)(\sigma) &= A \quad \forall \sigma \in \Delta^{o} \\
 (F + G)(t) &= F(t) +_{\mor{\sC}{A}{A}} G(t) \quad 
 \forall t \in \mor{\Delta^o}{\sigma}{\sigma'}, \ \forall \sigma, \sigma' \in \Delta^o \ ,\\
 \end{split}
 \end{equation}
and we define the functor $F\cdot G$ by
\begin{equation} \label{ttimes}
\begin{split}
  (F\cdot G)(\sigma) &= A \quad \forall \sigma \in \Delta^{o} \\
 (F\cdot G)(t) &= F(t) \circ_{\mor{\sC}{A}{A}} G(t) \quad 
 \forall t \in \mor{\Delta^o}{\sigma}{\sigma'}, \ \forall \sigma, \sigma' \in \Delta^o\ .\\
 \end{split}
\end{equation}
From this, it is clear how one defines $F - G$ and $F/G$. 
It is easy to check that the covariant functors $\Delta^o \to \sC$ are indeed a
small category whose objects form a commutative ring with a unit under the operations defined above.

Let $t \in \mor{\Delta^o}{\sigma}{\sigma'}$ where $\sigma$ and $\sigma'$ are arbitrary.
We define a $h_n : \grot{\mot} \to \sC$ by sending $\class{\sX{\bullet}}$ to the functor 
$t \mapsto \class{\sX n}$. 
Clearly, $h_n(0) = 0$ and $h_n(1) = 1$. 
Furthermore, when $\sX\bullet $ and
$\sY\bullet $ are disjoint sieves with the same ambient space, $h_n(\class{\sX\bullet} + \class{\sY\bullet})$ is the 
functor $t \mapsto \class{(\sX\ + \sY\ )_n}$ which according to Equation \ref{tplus} is the same as adding the functors
$t \mapsto \class{\sX n}$ and $t \mapsto \class{\sY n}$. Likewise, 
\begin{equation}
 h_n(\class{\sX\bullet}\cdot \class{\sY\bullet}) = h_n(\class{\sX\bullet\times\sY\bullet}) = t \mapsto \class{\sX n \times \sY n}
\end{equation} 
where the far right-hand side is the same as the multiplication of the functors $t \mapsto \class{\sX n}$ and $t \mapsto \class{\sY n}$
according to Equation \ref{ttimes}. Thus, $h_n$ is a ring homomorphism which is clearly surjective for all $n$. 

Let $\sX\bullet \in \mot$ be such that it is not $\mot$-homeomorphic to the empty simplicial sieve. This means that 
for some $n$, $\sX n$ is not the empty sieve. By choosing this $n$, the injectivity of $h_n$ follows. 
\end{proof}

\begin{remark}
Let $\sX\bullet \in \mot$ be such that it is not $\mot$-homeomorphic to the empty simplicial sieve. This implies that 
for some $n$, $\sX n$ is not the empty sieve. Thus, the kernel of $h_n$ is all $\sX{}\in \mot$ such that $\sX n = \emptyset$.
When $h_n$ is injective for some $n$, we say that $\mot$ is {\it strictly schemic}. Clearly, if $\mot = (\mot')_{\bullet}$ where $\mot'$ is some schemic motivic site, then $\mot$ is strictly schemic.
\end{remark}

\begin{remark}
 Let $\mot$ be a motivic site such that the collection of functors $[n] \mapsto \lef^{q(n)}$, where $q: \nat \to \nat$
 is any set map,  whose equivalence classes which lie in $\mot$ are such that they form a multiplicatively stable set $S$. 
 By somewhat abusing notation, we will denote by $\grot{\mot}_{\lef_{\bullet}}$
 the localisation of $\grot\mot$ at the multiplicatively stable set formed from the equivalence classes in $\grot{\mot}$ 
 of elements of $S$.
\end{remark}

\begin{corollary}\label{cordis} Let $\mot$ be a motivic site such that the collection of functors $[n] \mapsto \lef^{q(n)}$, where $q: \nat \to \nat$
 is any set map,  whose equivalence classes which lie in $\mot$ are such that they form a multiplicatively stable set $S$.
 Then for all $n\in\nat$, $h_n(\lef_{\bullet}) = \lef$ 
where $h_n$ is the ring
homomorphism defined in the proof of Theorem \ref{simdis}. Thus, for all $n \in \nat$, $h_n$ induces a surjective ring homomorphism from
$\grot{\mot}_{\lef_{\bullet}}$ to the simplicial categry of the category corresponding to the localised grothendieck ring
$\grot{\tau_n(\mot)}_{\lef}$. 
\end{corollary}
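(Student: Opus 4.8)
The plan is to deduce both parts of the statement from Theorem~\ref{simdis} and the universal property of localization. Fix $n\in\nat$. Recall from the proof of Theorem~\ref{simdis} that the target of $h_n$ is the ring $\sC_n$ of covariant functors $\Delta^{o}\to(\text{the one-object linear category with endomorphism ring }\grot{\tau_n(\mot)})$, with the pointwise ring structure described there; write $\sC'_n$ for the analogous ring of covariant functors into the one-object linear category with endomorphism ring $\grot{\tau_n(\mot)}_{\lef}$. This makes sense: $\tau_n(\mot)$ is a motivic site by Theorem~\ref{simdis}, and $\class{\affine{\fld}{1}}\in\grot{\tau_n(\mot)}$ since $\affine{\fld}{1}$ is the $n$-th truncation of the functor $[m]\mapsto\affine{\fld}{1}$, whose class lies in $\mot$ by hypothesis, so the powers of $\lef$ can indeed be inverted in $\grot{\tau_n(\mot)}$.

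\emph{First I would evaluate $h_n$ on $\lef_{\bullet}$.} By definition $\lef_{\bullet}$ is the class of the simplicial sieve $[m]\mapsto(\affine{\fld}{1})^{\circ}$, and by the explicit formula in the proof of Theorem~\ref{simdis} the homomorphism $h_n$ sends $\class{\sX{\bullet}}$ to the functor $t\mapsto\class{\sX{n}}$. Applying this to $[m]\mapsto(\affine{\fld}{1})^{\circ}$ yields the functor constant at $\class{\affine{\fld}{1}}$, which is the schemic Lefschetz motive $\lef$ of $\grot{\tau_n(\mot)}$; hence $h_n(\lef_{\bullet})=\lef$, which is the first assertion. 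The same computation applied to $[m]\mapsto\lef^{q(m)}$ shows that $h_n$ sends the class of this functor to $\lef^{q(n)}$ for every set map $q\colon\nat\to\nat$.

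\emph{Next I would pass to the localizations.} By the previous step the surjective ring homomorphism $h_n\colon\grot{\mot}\to\sC_n$ of Theorem~\ref{simdis} carries the multiplicatively stable set $S\subset\grot{\mot}$ into the set of powers of $\lef$ in $\sC_n$. Post-composing with the ring homomorphism $\sC_n\to\sC'_n$ obtained by applying $\grot{\tau_n(\mot)}\to\grot{\tau_n(\mot)}_{\lef}$ pointwise to functors, we obtain a ring homomorphism $\grot{\mot}\to\sC'_n$ under which every element of $S$ becomes a unit, because $\lef$ is invertible in $\grot{\tau_n(\mot)}_{\lef}$. The universal property of localization then furnishes a unique ring homomorphism $\bar h_n\colon\grot{\mot}_{\lef_{\bullet}}\to\sC'_n$ induced by $h_n$. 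Since $h_n$ is surjective and a surjection of rings induces a surjection of localizations, $\bar h_n$ will be surjective provided the natural map $\sC_n\to\sC'_n$ exhibits $\sC'_n$ as the localization of $\sC_n$ at $\lef$.

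\emph{I expect this last point to be the main obstacle:} one must check that inverting the class of the constant Lefschetz functor inside the ring $\sC_n$ of covariant functors $\Delta^{o}\to(\text{one-object category on }\grot{\tau_n(\mot)})$ reproduces exactly $\sC'_n$ --- i.e.\ that here the formation of the ``simplicial category of the category corresponding to a ring'' commutes with localizing the ring at $\lef$. Granting this, $\bar h_n$ is the asserted surjective ring homomorphism and the corollary follows.
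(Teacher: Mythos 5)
Your computation of $h_n(\lef_{\bullet})$ and the route through the universal property of localization are exactly the argument the paper leaves implicit (no proof is printed for this corollary), so the approach is right. The one thing you leave as a conditional --- whether inverting the constant Lefschetz functor in $\sC_n$ reproduces $\sC'_n$ --- is the genuine content of the second sentence and should be closed off rather than granted. It closes as follows: the surjectivity of $h_n\colon\grot{\mot}\to\sC_n$ asserted in Theorem~\ref{simdis} forces every object of $\sC_n$ to be a constant functor (the image of $h_n$ consists of constant functors $t\mapsto\class{\sX{n}}$), so $\sC_n$ is canonically identified with $\grot{\tau_n(\mot)}$ via evaluation, and for the same reason $\sC'_n$ is canonically identified with $\grot{\tau_n(\mot)}_{\lef}$. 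Under these identifications the pointwise map $\sC_n\to\sC'_n$ is literally the localization map $\grot{\tau_n(\mot)}\to\grot{\tau_n(\mot)}_{\lef}$, so $(\sC_n)_{\lef}\to\sC'_n$ is an isomorphism. Since $h_n$ carries the multiplicative set $S$ onto the powers of $\lef$ and localizing a surjective ring homomorphism at corresponding multiplicative sets preserves surjectivity, $\bar h_n\colon\grot{\mot}_{\lef_{\bullet}}\to\sC'_n$ is surjective, which is the claim.
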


\section{Simplicial adjunction}
For a noetherian scheme $V$, we denote by $\sch{V}$ the category of separated schemes of finite type over $V$. If $X = \spec R$ 
with $R$ a finite $\sO V$-module, then we say that $X$ is a {\it fat} V-{\it point}. We denote the category of fat $V$-points by 
$\fatpoints V$.

Let $V$ and $W$ be two noetherian schemes. We say that there is a {\it schemic adjunction relative to the pair} $(V,W)$ when there 
are functors
\begin{equation}
\begin{split}
 \nu : \fatpoints V \to \fatpoints W \\
 \nabla : \sch W \to \sch V
 \end{split}
\end{equation}
such that there is a natural bijection 
\begin{equation}
 \Theta : \mor W {\nu(\maxim)} X \to \mor V \maxim {\nabla(X)}
\end{equation}
for each $X \in \sch V$.

Let $j : \maxim \to \fld$ be the structure morphism of a fat point $\maxim \in \fatpoints \fld$. 
We have a functor $j^* : \fatpoints \fld \to \fatpoints \maxim$ defined by $j^* \fat : = \fat \times_{\fld} \maxim$
for $\fat \in \fatpoints \fld$ for which there is {\it the schemic agmentation functor} $\nabla_{j^*}$  which is the schemic adjunction to 
$j^*$. We also have a functor $j_* : \fatpoints \maxim \to \fatpoints \fld$ defined by sending $ t : \fat \to \maxim$ to $j\circ t: \fat \to \fld$.
There is also {\it the schemic dimunition functor} $\nabla_{j_*}$ which is the schemic adjunction to $j^*$. The 
details of this can be found in \S 3 of \cite{schmot2}. 
 
 We define {\it the schemic arc functor}
$\nabla_{\maxim}$ 
to be the composition 
\begin{equation}
 \nabla_{\maxim} := \nabla_{j^*} \circ \nabla_{j_*}  . 
\end{equation} 
For each $\maxim \in \fatpoints \fld$, we introduce the $F$-{\it simplicial arc operator} 
\begin{equation}
\sarc_{\maxim}^{F} : \ssieves \fld \to \ssieves \fld
\end{equation}
defined by sending a simplicial sieve $\sX{\bullet}$ to the functor $[n] \to \narc_{\maxim}^{F}\sX\bullet $ where 
\begin{equation}
\narc_{\maxim}^{F}\sX\bullet :=\arc{F(\maxim)_n}\sX n
\end{equation}
and where
$F : \sieves\fld \to \ssieves\fld$ is any covariant functor and where $\nabla_{F(\maxim)_n}$ is the arc operator defined 
in \S 4 of \cite{schmot2} with $F(\maxim)_n := F(\maxim)([n]) \in \fatpoints\fld$. 

\begin{remark}
Note that the face and degeneracy maps of the simplicial sieve $\sarc_{\maxim}^{F}\sX{}$ are induced from applying the arc operator to the face and degeneracy maps of $\sX{}$.
\end{remark}

\begin{remark}
 Following Remarks \ref{rk1} and \ref{rk2}, we denote $\sarc_{\maxim}, \sarc_{\maxim}^{fib}, \sarc_{\maxim}^{sym}$ for the 
 arc operators $\sarc_{\maxim}^{(-)_{\bullet}}, \sarc_{\maxim}^{(-)_{fib}}, \sarc_{\maxim}^{(-)_{sym}}$, respectively. They are called
 the {\it simplical arc operator}, {\it fiber simplicial arc operator}, and the {\it symmetric simplicial arc operator}, respectively. We will
 usually restrict our attention to the simplicial arc operator.
\end{remark}

\section{Limit simplicial sieves}

Previously, there have been two approaches to schemic motivic integration. We have the theory of \textit{finite} schemic motivic integration 
which was developed by Hans Schoutens in \cite{schmot1} and \cite{schmot2}. We call this theory finite because it measures
truncated arc spaces. The author of this paper developed in \cite{me} the beginnings of a theory of \textit{infinite} schemic motivic 
integration which 
measured the volume of arc spaces over certain types of limit points. 
The goal of this section is to generalize the theory of \cite{me} which will inturn subsume the approach of finite schemic motivic integration.
To wit, we must develop the theory of \textit{limit simplicial sieves}.

In \S 7 of \cite{schmot2}, a partial relation on $\fatpoints\fld$ was introduced. This is defined as $\maxim' \leq \maxim $ if $\maxim'$ is a closed subscheme of $\maxim$. We say that a subset $\bX$ of $\fatpoints\fld$ is a {\it point system} and that the direct limit $\fat$ of elements of $\bX$ in the category of locally ringed spaces is a limit point. We denote the full subcategory of locally ringed spaces formed by fat points over $\fld$ by $\limfat\fld$.

This allows us to form the {\it limit (simplicial) arc operator} $\sarc_{\fat}$ relative to a limit point $\fat = \injlim\bX$ to be the projective limit of functors $\sarc_{\maxim}$ where $\maxim \in \bX$.

\begin{definition} \label{deflimss}
Let $\sX \ \in \ssieves {\fld}$.
Let $\fat \in \limfat \fld$ and choose a point system $\bX$ such that $\fat = \injlim \bX$. For each 
$\maxim\in\bX,$ let $\sX {\maxim} \in \ssieves \fld$ be such that there is natural inclusion
\begin{equation}
\sX{\maxim} \into \sarc_{\maxim}{\sX \ }.
\end{equation}
We define $\sX{\star} := \projlim \sX{\maxim}$ and call  it {\it a limit simplicial sieve at the point} $\fat$
{\it with respect to the point system} $\bX$. We call $\sX \ $ {\it the base} of $\sX{\star}$.
\end{definition}

\begin{remark} \label{firstanl}
 Given two limit simplicial sieves $\sX{\star}$ and $\sY{\star}$, we may form 
 $\sX{\star} \times \sY{\star}$ and $\sX{\star} \sqcup \sY{\star}$ in analogue to Equation \ref{firstop}, and 
 we may form $\sX{\star} \cup \sY{\star}$ and $\sX{\star} \cap \sY{\star}$ in analogue to Equation \ref{secondop}.
\end{remark}

\begin{remark} \label{firstanl}
 Given two limit simplicial sieves $\sX{\star}$ and $\sY{\star}$, we may form 
 $\sX{\star} \times \sY{\star}$ and $\sX{\star} \sqcup \sY{\star}$ in analogue to Equation \ref{firstop}, and 
 we may form $\sX{\star} \cup \sY{\star}$ and $\sX{\star} \cap \sY{\star}$ in analogue to Equation \ref{secondop}.
\end{remark}

We form the category of limit sieves, denoted by $\limssieves \fld$, whose objects are functors
\begin{equation}
 \sX{\star} : \fatpoints \fld \to \sset
\end{equation}
where $\sX{\star}$ is a limit sieve at some point $\fat \in \limfat \fld$ with respect to some point 
system $\bX$ and where $\sX \ $ is the 
base of $\sX{\star}$, and whose morphisms are of the form 
\begin{equation}
f_{\star}: \sX{\star}  \to \sY{\star}
\end{equation}
where $f : \sX \ \to \sY \ $ is a morphism of simplicial sieves and where $f_{\star} = \projlim f_{\maxim}$ is defined as usual except that the 
morphisms $f_{\maxim}$ should form a commutative diagram with the inclusions in Definition \ref{deflimss} and the morphism 
$\sarc_{\maxim}{\sX  \ } \to \sarc_{p(\maxim)}{\sY \ }$ induced
by $f$ where $p$ is the morphism of point systems induced by $f_{\star}$. In other words, we should have the following commutative 
diagrams 
\[
\xymatrix{
&\sX\maxim  \ar[d]_{f_{\maxim}}  \ar@{^{(}->}[r] &\sarc_{\maxim}{\sX  \ } \ar[d] \\
&\sY{\maxim} \ar@{^{(}->}[r] &  \sarc_{p(\maxim)}{\sY \ }\\}
\]
in the category $\ssieves \fld$ for all $\maxim \in \bX$.

\begin{example}\label{limschex}
 An important example of a limit simplicial sieve is what we call {\it a limit simplicial scheme}. 
 This occurs when the base $\sX \ $ of a limit simplicial sieve $\sX{\star} \in \limssieves\fld$ 
 is isomorphic to a simplicial scheme $X \in \ssch\fld$ and where
 $\sX{\maxim}$ is isomorphic to $\sarc_{\maxim}X$ for each $\maxim\in \fatpoints\fld$. This implies that
 $\sX{\star}$ is isomorphic to $\sarc_{\fat}X$ in $\limssieves\fld$ where $\fat = \injlim \bX$. 
 We denote
 the full subcategory of $\limssieves \fld$ whose objects are all limit simplicial schemes by $\limssch \fld$. 
 Thus, by Definition \ref{deflimss}, every limit simplicial sieve $\sX{\star}$ is naturally contained in a limit simplicial scheme; 
 we call such a limit simplicial scheme {\it an ambient space} of $\sX{\star}$. 
\end{example}

\begin{example} \label{limscvex}
 Another important example of a limit simplicial sieve is what we call {\it a finite simplicial sieve}. This occurs when 
 the limit point $\maxim$ is in fact an element of $\fatpoints\fld$, or, equivalently, the point system $\bX$ is just a finite set.
 Thus, a limit simplicial sieve in this case is just a simplicial sieve of the form $\sarc_{\maxim} \sX{}$ where $\maxim$
 is any fat point over $\fld$. In particular, by choosing $\maxim = \spec\fld$, we can see that 
 $\ssieves\fld$ is a full subcategory of $\limssieves\fld$ as $\sarc_{\spec\fld}\sX{} = \sX{}$ for any simplicial scheme $\sX{}$. 
\end{example}

\begin{con} We describe the analogue of Construction \ref{admopen} below.
Given a morphism $s : \sY{\star} \to \sX{\star}$ in
$\limssieves \fld$ and another limit simplicial sieve $\sX{\star}' \subset \sX{\star}$, we define 
{\it the pull-back of} $\sX{\star}'$ {\it along} $s$, denoted by $s^*\sX{\star}'$, as the
simplicial subsieve defined by 
\begin{equation}
 s(\maxim)^{-1}\sX{\star}'(\maxim) : \Delta^o \to \set \ .
\end{equation}
If $\sX{\star} \in \limssieves \fld$ and $X$ is an ambient space of $\sX{\star}$, we say that
a subsieve of $\sX{\star}$ is
an {\it admissible open} of $\sX{\star}$ if it is of the form $\sX{\star} \cap U^{\circ}$
where $U$ is an open set of $X$. 
Note that this definition does not depend on the ambient space of $\sX{\star}$. We
say that 
a morphism $s : \sX{\star} \to \sY{\star}$ in $\limssieves \fld$ is {\it continuous} if the
pull-back of an admissible open along $s$
is again an admissible open. 

We say that a subcategory $\mot$ of $\limssieves \fld$ is a {\it limit simplicial motivic
site}
if it is closed under products and if the set
\begin{equation}
\mot|_X := \{ \sX{\star} \in \mot \mid \sX{\star}\subset X\}
\end{equation}
forms a lattice with respect to $\cup$ and $\cap$ for each $X\in\limssch \fld$. 
We say that a morphism  $s :\sY{\star} \to \sX{\star}$ in a simplicial motivic site
$\mot$
is a $\mot$-{\it homeomorphism} if it is continuous and bijective whose inverse
is also continuous. This allows us to form the grothendieck ring $\grot{\mot}$ in analogue to Construction \ref{firstcon}.
\end{con}

\begin{definition} \label{defmes}
We say that a limit simplicial sieve $\sX{\star}  $ is {\it measurable} if there is a non-negative real number $Q$ such that
\begin{equation}\label{defmes}
\mu_{\fat, \bX, Q}^{\sim}(\sX{\star}) :=  \ulim {\class{\sX \maxim}\lef^{-\lceil Q\cdot\dim {\sarc_{\maxim}{\sX \ }}\rceil}}
\end{equation}
is an element of the image of the diagonal ring homomorphism
\begin{equation}
\grot{\ssieves {\fld}}_{\lef_{\bullet}} \to \prod_{\sim} \grot{\ssieves {\fld}}_{\lef_{\bullet}}
\end{equation}
where $\sim$ is some 
ultrafilter\footnote{Clearly, if $\bX$ is finite, then  all ultrafilters are principle. Thus, the ultra-power would be isomorphic to one of its factors. This is exactly what we want as it coincides with the finite motivic measures found in \cite{schmot2}.} on $\bX$ which defines the term on the right-hand side of Equation \ref{defmes} as an ultra-limit. Here
$\lef^{-\lceil Q\cdot\dim {\sarc_{\maxim}{\sX \ }}\rceil}$ is defined to be the multiplicative inverse of the equivalence class of the simplicial scheme defined by
$[n]$ to $\mathbb{A}_{\fld}^{-\lceil Q\cdot\dim {\narc_{\maxim}{\sX \ }}\rceil}$.
We denote the full subcategory of $\limssieves \fld$ whose objects are all measurable limit simplicial sieves as $\smes \fld$. 
\end{definition}

\begin{theorem} The category
 $\smes \fld$ is a partial motivic site which is supported on every measurable limit simplicial scheme. Therefore, we may form 
 its. Therefore, we may form its grothendieck ring $\grot{\smes \fld}$.
\end{theorem}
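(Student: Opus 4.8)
The plan is to check the two axioms of a (limit-simplicial) partial motivic site for $\smes\fld$: (i) closure under binary products $\times$, and (ii) for every object $X$ in its declared support --- here, every measurable limit simplicial scheme --- the fibre $\smes\fld|_X = \{\sX{\star}\in\smes\fld\mid\sX{\star}\subset X\}$ forms a lattice under $\cup$ and $\cap$. Once (i) and (ii) hold, the limit-simplicial analogue of Construction \ref{firstcon} produces the Grothendieck ring $\grot{\smes\fld}$. The structural fact used throughout is that $\mu_{\fat,\bX,Q}^{\sim}$ lands in the image of the diagonal homomorphism $\grot{\ssieves\fld}_{\lef_\bullet}\to\prod_\sim\grot{\ssieves\fld}_{\lef_\bullet}$, and that this image is a \emph{subring} --- closed under $+$, $-$, and $\cdot$. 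Moreover, the termwise operations on limit simplicial sieves of Remark \ref{firstanl} are computed levelwise from the pieces $\sX\maxim$ via Equations \ref{firstop} and \ref{secondop}; the Grothendieck class is multiplicative under $\times$ and satisfies the scissor relation under $\cup,\cap$ (Construction \ref{firstcon}); and the schemic arc functor $\sarc_\maxim = \nabla_{j^*}\circ\nabla_{j_*}$ is a right adjoint, hence preserves products and finite limits, so $\sarc_\maxim(\sX \ \times\sY \ )\cong\sarc_\maxim\sX \ \times\sarc_\maxim\sY \ $.

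For (i): given measurable $\sX{\star}$ and $\sY{\star}$, reduce as usual to the case of a common limit point $\fat = \injlim\bX$. Then $\class{(\sX \ \times\sY \ )_\maxim} = \class{\sX\maxim}\cdot\class{\sY\maxim}$, and since $\sarc_\maxim$ preserves products and dimension is additive on products, the normalising exponent for the product is $\lceil Q_1\dim{\sarc_\maxim\sX \ }\rceil + \lceil Q_2\dim{\sarc_\maxim\sY \ }\rceil$, which agrees with $\lceil Q\cdot\dim{\sarc_\maxim(\sX \ \times\sY \ )}\rceil$ when $Q_1=Q_2=Q$ up to a ceiling discrepancy bounded independently of $\maxim$. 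Absorbing this bounded discrepancy (it is eventually constant along the point system, and vanishes outright when $Q$ times the relevant dimension stays integral), one obtains $\mu^\sim(\sX{\star}\times\sY{\star}) = \mu^\sim(\sX{\star})\cdot\mu^\sim(\sY{\star})$ in the ultrapower, which therefore lies in the diagonal because the diagonal is multiplicatively closed. Hence $\sX{\star}\times\sY{\star}\in\smes\fld$.

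For (ii): fix a measurable limit simplicial scheme $X\cong\sarc_\fat Z$ with $Z\in\ssch\fld$ (Example \ref{limschex}), and let $\sX{\star},\sY{\star}\subset X$ be measurable over common $\fat,\bX$. Levelwise the scissor relation in $\grot{\ssieves\fld}$ reads
\[
\class{(\sX \ \cup \sY \ )_n} + \class{(\sX \ \cap \sY \ )_n} = \class{\sX n} + \class{\sY n}.
\]
The obstruction to dividing through by a common power of $\lef_\bullet$ and passing to the ultralimit is that $\dim{\sarc_\maxim(\sX \ \cup\sY \ )} = \max(\dim{\sarc_\maxim\sX \ },\dim{\sarc_\maxim\sY \ })$ levelwise, which in general differs from either single normalisation. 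One resolves this by the standard ultrafilter dichotomy --- $\bX$ is partitioned by the set where $\dim{\sarc_\maxim\sX \ }\le\dim{\sarc_\maxim\sY \ }$ and its complement, so $\sim$ selects one, say the former --- together with the observation that the terms supported in strictly smaller dimension become $\mu^\sim$-null: normalised against $\lef_\bullet^{-\lceil Q\dim{\sarc_\maxim\sY \ }\rceil}$ they sit in unboundedly high codimension, so their ultralimit is $0$ in the diagonal. Thus $\mu^\sim(\sX{\star}\cup\sY{\star}) = \mu^\sim(\sY{\star})$; since the diagonal is closed under $+$ and $-$, measurability of any three of $\sX{\star},\sY{\star},\sX{\star}\cup\sY{\star},\sX{\star}\cap\sY{\star}$ forces it of the fourth, so $\sX{\star}\cup\sY{\star}$ and $\sX{\star}\cap\sY{\star}$ are measurable and $\smes\fld|_X$ is a lattice.

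The main obstacle is precisely this reconciliation of the normalising exponents $\lceil Q\cdot\dim{\sarc_\maxim(-)}\rceil$ under $\times$ and $\cup$: the nullity estimate for the defect terms is the limit-point analogue of the estimates underlying the finite and infinite schemic theories of \cite{schmot2} and \cite{me}, and one must control the ceiling functions (and the exponents $Q$) so that the surviving exponents are eventually constant along the point system, guaranteeing that the resulting elements lie in the diagonal and not merely in the ultrapower. Granting these estimates, (i) and (ii) hold, so $\smes\fld$ is a partial motivic site supported on the measurable limit simplicial schemes; its Grothendieck ring $\grot{\smes\fld}$ is then the free abelian group on the $\mot$-homeomorphism classes $\sym{\sX{\star}}$ of objects of $\smes\fld$, modulo the scissor relations $\sym{\sX{\star}\cup\sY{\star}} + \sym{\sX{\star}\cap\sY{\star}} - \sym{\sX{\star}} - \sym{\sY{\star}}$ taken over measurable ambient spaces, with multiplication induced by $\times$, exactly as in Construction \ref{firstcon}.
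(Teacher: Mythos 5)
The paper's own proof is essentially vacuous --- it consists of the two assertions ``it is obvious that $\smes\fld$ is closed under products'' and ``it is clear'' that $\smes\fld|_X$ is a distributive lattice with $\emptyset$ and $X$ as minimum and maximum. You, on the other hand, actually confront the content of Definition \ref{defmes} and correctly identify the genuine obstacles: the exponents $Q$ and the ceiling functions $\lceil Q\cdot\dim\sarc_\maxim(-)\rceil$ do not interact well with $\times$, $\cup$, $\cap$, and the ultrafilter $\sim$ and normalizer $Q$ are in principle allowed to vary between different measurable sieves. That is a more honest accounting of what needs to be shown than the paper gives.

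That said, two of your steps do not hold up as written. First, the nullity claim for the lattice case: you argue that the levelwise contribution from the lower-dimensional sieve, once normalized against $\lef^{-\lceil Q\dim\sarc_\maxim\sY\ \rceil}$, sits in unboundedly high codimension and ``so their ultralimit is $0$ in the diagonal.'' But an ultralimit is not a topological limit. The element $\ulim{x_\maxim}$ lies in the diagonal of $\prod_\sim\grot{\ssieves\fld}_{\lef_\bullet}$ \emph{if and only if} the family $(x_\maxim)$ is $\sim$-almost constant; a family of nonzero classes whose ``codimension grows'' without actually becoming zero has a perfectly nonzero ultralimit, just not a diagonal one. So this step does not force $\mu^\sim(\sX\star\cup\sY\star)=\mu^\sim(\sY\star)$, and without it the ``any three of four implies the fourth'' conclusion collapses. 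Second, that three-of-four step itself presupposes a single common normalizer $\lef^{-N_\maxim}$ applied to all four terms of the scissor relation; Definition \ref{defmes} instead normalizes each sieve by its \emph{own} $\lceil Q\cdot\dim\sarc_\maxim(-)\rceil$, and reconciling the two requires exactly the kind of uniform-exponent estimate you flag but defer (``granting these estimates''). The product case has the same flavor: your ``bounded ceiling discrepancy'' is unbounded whenever $Q_1\neq Q_2$, and even when $Q_1=Q_2=Q$ the residual powers of $\lef$ must be shown $\sim$-eventually constant, not merely bounded. In short: you have located precisely where a real proof would have to do work, but the reconciliation arguments you propose are not yet proofs, and the ultralimit-nullity step in particular is incorrect as stated.
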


\begin{proof}
It is obvious that $\smes \fld$ is closed under products. 
 A measurable limit simplicial scheme $X$ is an element of both $\limssch \fld$ and $\smes \fld$. It is clear then that for such
 an $X$, $\smes{\fld}|_X$ is a distributive lattice formed containing $\emptyset$ and $X$ as minima and maxima, respectively. 
\end{proof}

\begin{remark}
 It should be noted that the notation $\limssieves \fld|_{\sX \ }$ and $\smes \fld|_{\sX \ }$ mirrors that of 
 \cite{schmot1}, yet it 
 does not follow that notation exactly. Nothing essential changes by passing to limit sieves vis-a-vis forming the 
 associated Grothendieck rings of limit sieves and measurable sieves.
\end{remark}

\section{Grothendieck ring of a relative limit simplicial motivic site}

\begin{definition} \label{relmot}
Let $\mot_{\fld}$ be a limit simplicial (partial) motivic site. Let $\sS$ be a simplicial sieve. 
 We define $\mot_{\sS}$ to be the full subcategory of 
$\mot_{\fld}$ whose objects $\sX {\star}$ are naturally contained in 
$\sS_{\star} \times (\affine {\fld}{n})^{\circ}$ for some nonnegative integer $n$ and for some limit simplicial sieve $\sS_{\star} \in \mot_{\fld}$
with base $\sS$ such that there is 
a $\mot_{\fld}$-morphism of limit sieves  $j:\sX{\star} \to \sS_{\star}$ which commutes 
with the projection morphism $\sS_{\star} \times (\affine {\fld} {n})^{\circ} \to \sS_{\star}$. We call $j$ the structure morphism.
 Since $\limssieves\fld$ and $\limssch\fld$, we may form the categories 
 $\limssieves\sS$ and $\limssch\sS$, respectively, for any simplicial sieve $\sS$. We call objects in these categories 
 {\it limit simplicial} $\sS$-{\it sieves}, {\it limit simplicial} $\sS$-{\it schemes}, respectively. 
 Likewise, we may form the category $\smes{S}$ of
 {\it measurable limit} $\sS$-{\it sieves}.
\end{definition}

\begin{remark} \label{reductdef}
 Using Example \ref{limscvex}, we can easily define the categories of {\it simplicial} $\sS$-{\it sieves}
 and {\it simplicial} $\sS$-{\it schemes}
 denoted by 
 $\ssieves\sS$ and $\ssch\sS$, respectively, for any simplicial sieve $\sS$. Moreover, the full subcatgories of $\sieves \fld$
 formed by $\tau_0(\ssieves{\sS})$ and $\tau_{0}(\ssch\sS)$ define the notion of $\sS_0$-{\it sieves} and $\sS_0$-{\it schemes}, respectively,
 where $\sS_0$ is any element of $\sS_0 \in \sieves\fld$. 
 \end{remark}

\begin{definition} \label{goob}
Given a morphism $s : \sY{\star} \to \sX{\star}$ in
$\limssieves \sS$ and a simplicial $\sS$-subsieve $\sX{\star}' \subset \sX{\star}$, we define 
{\it the pull-back of} $\sX{\star}'$ {\it along} $\nu$, denoted by $s^*\sX{\star}'$ as the
simplicial $\sS$-subsieve defined by 
\begin{equation}
 s(\maxim)^{-1}\sX{\star}'(\maxim) : \Delta^o \to \set \ .
\end{equation}
If $\sX{\star} \in \limssieves \sS$ and $X$ is an ambient space of $\sX{\star}$, we say that
a subsieve of $\sX{\star}$ is
an $\sS$-{\it admissible open} of $\sX{\star}$ if it is of the form $\sX{\star} \cap U^{\circ}$
where $U$ is an open set of $X$. 
Note that this definition does not depend on the ambient space of $\sX{\star}$. We
say that 
a morphism $s : \sX{\star} \to \sY{\star}$ in $\limssieves \sS$ is $\sS$-{\it continuous} if the
pull-back of an $\sS$-admissible open along $s$
is again a $\sS$-admissible open. 
We say that a subcategory of $\mot$ of $\limssieves\sS$ is a {\it limit simplicial} $\sS$-{\it motivic site} if it is closed under products
and for any $X \in \limssch\sS$, $\mot|_X$ is a complete latice with respect to $\cup$ and $\cap$. 
We say that a morphism $s :\sY{\star} \to \sX{\star}$ in a simplicial motivic site
$\mot$
is a $\mot$-{\it homeomorphism} if it is continuous and bijective whose inverse
is also continuous.
\end{definition}

\begin{remark}
 As in Remark \ref{reductdef}, we may use Definition \ref{goob} and Example \ref{limscvex}, to define the notion of 
 a {\it simplicial} $\sS$-motivic site for any simplicial sieve $\sS$. We may use this definition together with the map $\tau_0$, to define the notion 
 of a $\sS_0$-motivic site for any $\sS_{0} \in \sieves\fld$.
\end{remark}

\begin{theorem}
 Let $\mot_{\fld}$ be a limit simplicial (partial) motivic site and let $S$ be a simplicial scheme. Then, $\mot_{S}$ is also a limit simplicial (partial) 
 $S$-motivic site as defined in Definition 
 \ref{relmot}. Moreover, when $\sS$ is a simplicial sieve, $\mot_{\sS}$ is a (partial) $\sS$-motivic site supported on all limit simplicial 
 $\sS$-schemes which are also elements of $\mot_{\sS}$.
 Futhermore, this is enough to define its grothendieck ring $\grot{\mot_{S}}$ (respectively, $\grot{\mot_{\sS}}$).
\end{theorem}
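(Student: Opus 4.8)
The plan is to reduce the statement to the unrelativized case already established in the earlier theorems. First I would show that $\mot_S$ is closed under products: given $\sX{\star}, \sY{\star} \in \mot_S$ with structure morphisms $j_{\sX}: \sX{\star} \to \sS_{\star}$ and $j_{\sY}: \sX{\star} \to \sS_{\star}'$, the product $\sX{\star} \times \sY{\star}$ sits inside $(\sS_{\star} \times \sS_{\star}') \times (\affine{\fld}{m}\times\affine{\fld}{n})^{\circ}$, and since $\mot_{\fld}$ is closed under products (being a limit simplicial motivic site) the base $\sS_{\star}\times\sS_{\star}'$ again lies in $\mot_{\fld}$; composing with the evident projection gives a structure morphism, so $\sX{\star}\times\sY{\star} \in \mot_S$. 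This uses only that $\mot_{\fld}$ is closed under products and that $(\affine{\fld}{m})^{\circ}\times(\affine{\fld}{n})^{\circ} \cong (\affine{\fld}{m+n})^{\circ}$.

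Next I would verify the lattice condition. Fix a limit simplicial $S$-scheme $X \in \mot_S$, which means $X$ comes with a structure morphism to some $\sS_{\star}$ and is contained in $\sS_{\star}\times(\affine{\fld}{n})^{\circ}$. The point is that $\cup$ and $\cap$ are computed pointwise, section by section and simplex by simplex, exactly as in Equation \ref{secondop}; in particular these operations do not disturb containment in a fixed ambient space $X$, nor do they disturb the existence of a structure morphism to $\sS_{\star}$ (one simply restricts $j_X$). Hence $\mot_S|_X$ is a sublattice of the corresponding $\mot_{\fld}|_X$, which is a (complete) lattice by hypothesis on $\mot_{\fld}$; so $\mot_S|_X$ inherits the lattice structure, with $\emptyset$ and $X$ as extrema in the partial/supported case. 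For the sieve-parametrized version, one invokes Example \ref{limscvex} to see $\ssieves\sS \subset \limssieves\sS$ and repeats the argument with $\sS$ in place of a scheme; the support claim (that $\mot_{\sS}$ is a partial $\sS$-motivic site supported on the limit simplicial $\sS$-schemes in $\mot_{\sS}$) is immediate since those schemes already serve as ambient spaces by construction in Definition \ref{relmot}.

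Finally, having checked that $\mot_S$ (resp. $\mot_{\sS}$) satisfies the axioms of a limit simplicial $S$-motivic site (resp. a partial $\sS$-motivic site) from Definition \ref{goob}, the Grothendieck ring $\grot{\mot_S}$ (resp. $\grot{\mot_{\sS}}$) is then formed exactly as in Construction \ref{firstcon}: take the free abelian group on $\mot_S$-homeomorphism classes $\sym{\sX{\star}}$, impose the scissor relations for pairs sharing an ambient space, and define multiplication by $\class{\sX{\star}}\cdot\class{\sY{\star}} := \class{\sX{\star}\times\sY{\star}}$, which is well-defined precisely because of the closure under products established above.

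I expect the main obstacle to be bookkeeping rather than mathematical depth: one must check that the structure morphism $j$ and the commutativity with the projection $\sS_{\star}\times(\affine{\fld}{n})^{\circ} \to \sS_{\star}$ are genuinely preserved under $\cup$, $\cap$, and $\times$, and — more delicately — that $\mot_S$-homeomorphism is the right equivalence relation to make the scissor relations and the product pairing descend to $\grot{\mot_S}$ (i.e. that an $\mot_S$-homeomorphism is in particular an $\mot_{\fld}$-homeomorphism, so no new identifications are lost or gained). Since all the relevant constructions are pointwise on sections over fat points and on simplices, these checks are routine but must be stated; the reader is referred, as in the rest of the paper, to the analogous verifications in \cite{schmot1}.
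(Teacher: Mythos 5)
Your overall plan is close to the paper's, but there are two genuine gaps.

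First, the product argument skips the key step. To put $\sX{\star}\times\sY{\star}$ into $\mot_{\sS}$ you must exhibit a limit simplicial sieve $\sS_{\star}''\in\mot_{\fld}$ \emph{with base $\sS$} such that $\sX{\star}\times\sY{\star}\subset\sS_{\star}''\times(\affine\fld{n+m})^{\circ}$. You take $\sS_{\star}''=\sS_{\star}\times\sS_{\star}'$, but a priori this has base $\sS\times\sS$, not $\sS$ -- and nothing in your writeup addresses this. The paper closes this gap by constructing a new point system $\bX''$ out of the fiber products $\maxim\times_{\fld}\maxim'$ with $\maxim\in\bX$, $\maxim'\in\bX'$, and checking that $\sS_{\maxim}\times\sS_{\maxim'}'\subset\sarc_{\maxim\times\maxim'}\sS$, so that $\sS_{\star}\times\sS_{\star}'=\projlim(\sS_{\maxim}\times\sS_{\maxim'}')$ is again a limit simplicial sieve at $\injlim\bX''$ with base $\sS$. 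This is not bookkeeping; it is the one non-trivial construction in the proof, and your phrase ``the base $\sS_{\star}\times\sS_{\star}'$ again lies in $\mot_{\fld}$'' suggests the distinction between a limit sieve and its base has been blurred at exactly the point where it matters.

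Second, your multiplication is wrong for the relative Grothendieck ring. You propose $\class{\sX{\star}}\cdot\class{\sY{\star}}:=\class{\sX{\star}\times\sY{\star}}$, the absolute product, which is what one does in Construction \ref{firstcon} over $\fld$. But in $\grot{\mot_{\sS}}$ the paper deliberately sets $\class{\sX{\star}}\cdot\class{\sY{\star}}:=\class{\sX{\star}\times_{\sS}\sY{\star}}$, the fiber product over $\sS$ (computed pointwise), precisely so that $\class{\sS}$ becomes the multiplicative unit and one obtains a sensible relative Lefschetz motive. With the absolute product $\class{\sS}$ would not be the unit and the relative theory would not specialize correctly. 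Your lattice argument and the reduction to the unrelativized case otherwise track the paper's, so once these two points are fixed the proof goes through.
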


\begin{proof}
Let $\sS$ be any simplicial sieve.
 We first show that $\mot_{\sS}$ is closed under products. Thus, let $\sX{\star}$ and $\sY{\star}$ be two elements of $\mot_{\sS}$ which are 
 contained in two limit simplicial sieves $\sS_{\star}$ and $\sS_{\star}^{\prime}$, respectively, with base $\sS$. 
 In particular, this means that there are limit points $\fat = \injlim \bX$ and $\fat^{\prime} = \projlim \bX^{\prime}$ 
 such that $\sS_{\star} = \projlim_{\maxim\in\bX} \sS_{\maxim}$ and $\sS_{\star}^{\prime} = \projlim_{\maxim^{\prime}\in\bX^{\prime}}\sS_{\maxim}^{\prime}$
 where $\sS_{\maxim}$ and $\sS_{\maxim^{\prime}}$ are naturally contained in $\sarc_{\maxim} \sS$ and $\sarc_{\maxim^{\prime}}\sS$, respectively.
 Thus, we set $\bX^{\prime\prime}$ to be the point system formed by $\maxim \times_{\fld} \maxim^{\prime}$ with 
 $\maxim \in \bX$ and $\maxim^{\prime} \in \bX^{\prime}$ so that 
 \begin{equation}
 \sS_{\maxim} \times \sS_{\maxim^{\prime}}^{\prime} \subset \sarc_{\maxim \times \maxim^{\prime}} \sS .
 \end{equation}
 Then it is immediate that the $\sS_{\star} \times \sS_{\star}^{\prime} = \projlim\sS_{\maxim} \times \sS_{\maxim^{\prime}}^{\prime}$
 is a limit simplicial sieve with base $\sS$ such that  $\sX{\star}\times \sY{\star}$ is naturally contained in $\sS_{\star} \times \sS_{\star}^{\prime}\times(\affine{\fld}{n+m})^{\circ}$.
 Since $\mot_{\fld}$ is a limit simplicial site, we have $\sX{\star}\times \sY{\star} \in \mot_{\fld}$. Furthermore,
 it is clear that we can define a $\mot_{\fld}$-morphism from $\sX{\star}\times \sY{\star}$ to $\sS_{\star} \times \sS_{\star}^{\prime}$
 by taking the product of the morphisms $\sX{\star} \to \sS_{\star}$ and $\sY{\star} \to \sS_{\star}^{\prime}$. 
 Since the projection of the products is the product of the projections, this morphism will commute with the projection
 $\sS_{\star} \times \sS_{\star}^{\prime}\times(\affine{\fld}{n+m})^{\circ} \to \sS_{\star} \times \sS_{\star}^{\prime}$. Thus,
 $\mot_{\sS}$ is closed under products.
 
 Now, let $X \in \limssch\fld$ and let $S$ be a simplicial scheme. We need to show that $\mot_{S}|_{X}$ is a complete lattice. The fact that $\emptyset$ and $X$
 are in $\mot_{S}|_X$ and the respective minimum and maximum is obvious. Furthermore, it is clear that the union and intersection of
 two elements of $\mot_{S}|_{X}$ is again an element of $\mot_{S}|_{X}$. 
 
 This is enough to define the grothendieck group as nothing essential changes. For good measure, we repeat the construction here. 
We denote the isomorphism class (defined as a 
$\mot_{S}$-homeomorphism) of $\sX{\star} \in \mot_S$ by $\sym{\sX{\star}}$. Then, we denote by
$\grot{\mot_{S}}$ the free abelian group generated by 
$\sym{\sX{\star}}$ modulo the {\it scissor relations}
\begin{equation}
 \sym{\sX{\star}\cup \sY{\star}} + \sym{\sX{\star}\cap\sY{\star}}  - \sym{\sX{\star}} - \sym{\sY{\star}}
\end{equation}
when $\sX{\star}$ and $\sY{\star}$ share the same ambient space $X$. 
We denote by $\class{\sX{\star}}$ the residue class of $\sym{\sX{\star}}$ in $\grot{\mot_{S}}$.
We define multiplication in $\grot{\mot_{\sS}}$ by
$\class{\sX{\star}}\cdot\class{\sY{\star}} := \class{\sX{\star}\times_{\sS}\sY{\star}}$. Note that this makes $\class\sS$ the multiplicative
unit. 
\end{proof}

\begin{remark}
The fiber product of sieves $\sX{} \times_{\sS} \sY{}$ is computed pointwise. There is no problem here as the fiber product exists in 
the category of sets. We define multiplication this way with the expressed purpose to make $\sS$ the multiplicative unit. This way
we have a good notion of a relative leftschetz motive.
 More clearly, we denote by $(\lef_{\sS})_{\bullet}$,
$\lef_{\bullet}$, $\lef_{\sS}$, or just $\lef$ for the equivalence class of  $\sS \times \affine{\fld}{1}$. 
\end{remark}

\begin{corollary}
 Let $S$ be a simplicial scheme. Then $\limssieves{S}$ and $\limssch{S}$ are limit simplicial $S$-motivic sites. Thus, we may form
 their grothendieck rings $\grot{\limssieves{S}}$ and $\grot{\limssch{S}}$, respectively. Also, $\smes{S}$ is a limit simplicial
 partial motivic site supported on all measurable simplicial $S$-schemes
 which means that we may form its grothendieck ring $\grot{\smes{S}}$. Moreover, given any
 simplicial sieve $\sS$, $\limssieves{\sS}$, $\limssch{\sS}$, and $\smes{\sS}$ are partial motivic sites supported
 on all limit simplicial $\sS$-schemes (respectively, all measurable simplicial $\sS$-schemes). Thus, we may 
 form their grothendieck rings $\grot{\limssieves{\sS}}$ $\grot{\limssch{\sS}}$, and $\grot{\smes{\sS}}$. 
\end{corollary}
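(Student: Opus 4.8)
The plan is to derive this corollary immediately from the Theorem proved just above, applied to the three specific choices $\mot_{\fld} = \limssieves\fld$, $\mot_{\fld} = \limssch\fld$, and $\mot_{\fld} = \smes\fld$, after checking that each of these is a legitimate input for that Theorem. First I would recall that the Construction following Example~\ref{limscvex} already shows $\limssieves\fld$ and $\limssch\fld$ are limit simplicial motivic sites --- they are closed under products and $\mot|_X$ is a lattice under $\cup$ and $\cap$ for every ambient limit simplicial scheme $X$ --- while the Theorem asserting that $\smes\fld$ is a partial motivic site shows $\smes\fld$ is a limit simplicial \emph{partial} motivic site supported on every measurable limit simplicial scheme. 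Hence all three are admissible as the object $\mot_{\fld}$ in the Theorem immediately preceding this corollary.

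Next I would feed each in turn, together with the given simplicial scheme $S$, into that Theorem. It yields that $(\limssieves\fld)_S$ and $(\limssch\fld)_S$ are limit simplicial $S$-motivic sites, and that $(\smes\fld)_S$ --- since $\smes\fld$ is only partial --- is a partial $S$-motivic site supported on all measurable limit simplicial $S$-schemes contained in it. I would then identify these categories with $\limssieves S$, $\limssch S$, $\smes S$ of Definition~\ref{relmot}: unwinding that definition, an object naturally contained in $\sS_{\star}\times(\affine{\fld}{n})^{\circ}$ with base $\sS = S$ and equipped with a structure morphism to $\sS_{\star}$ commuting with the projection is exactly a limit simplicial $S$-sieve (respectively $S$-scheme, respectively measurable $S$-sieve), so the identification is a matter of definition-chasing. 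Having made it, the Grothendieck rings $\grot{\limssieves S}$, $\grot{\limssch S}$, $\grot{\smes S}$ are produced by the same scissor-relation construction recalled inside the proof of that Theorem.

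For the final assertion I would simply rerun the argument with an arbitrary simplicial sieve $\sS$ in place of the simplicial scheme $S$. The second sentence of the preceding Theorem is stated precisely for this case: $\mot_{\sS}$ is a partial $\sS$-motivic site supported on all limit simplicial $\sS$-schemes lying in it (and on all measurable simplicial $\sS$-schemes when $\mot_{\fld} = \smes\fld$). Taking $\mot_{\fld}$ to be $\limssieves\fld$, $\limssch\fld$, $\smes\fld$ in turn gives the three partial motivic sites $\limssieves\sS$, $\limssch\sS$, $\smes\sS$, and, once more by the construction in that proof, their Grothendieck rings $\grot{\limssieves\sS}$, $\grot{\limssch\sS}$, $\grot{\smes\sS}$.

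The hard part will be the bookkeeping behind the word \textbf{supported}: I must check that an object of $(\smes\fld)_S$ which is in addition a limit simplicial $S$-scheme remains measurable \emph{as an $S$-scheme}, i.e. that the ultra-limit defining $\mu^{\sim}_{\fat,\bX,Q}$ in Definition~\ref{defmes} is compatible with the structure morphism to $\sS_{\star}$ and with the product decomposition $\sS_{\star}\times(\affine{\fld}{n})^{\circ}$ used to relativize. Everything else --- closure under products, the lattice and complete-lattice properties, and the passage to the Grothendieck ring --- is a routine transcription of the arguments already carried out in the proof of the preceding Theorem and in the Construction following Example~\ref{limscvex}, so I would not belabor those points.
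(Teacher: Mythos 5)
Your proposal is correct and matches the paper's (implicit) approach: the paper gives no separate proof for this corollary because it is an immediate instantiation of the preceding theorem at $\mot_\fld = \limssieves\fld$, $\limssch\fld$, $\smes\fld$, together with Definition~\ref{relmot}, which \emph{defines} $\limssieves\sS$, $\limssch\sS$, $\smes\sS$ as $(\limssieves\fld)_\sS$, $(\limssch\fld)_\sS$, $(\smes\fld)_\sS$, so the ``identification'' you describe is tautological rather than something to check. The ``hard part'' you flag is actually not needed for this corollary, since here ``measurable'' means measurable in the sense of Definition~\ref{defmes} (as a $\fld$-sieve), not $S$-measurable in the sense of Definition~\ref{defrelmes}; the compatibility of the two notions is deferred to a later Proposition and is not invoked here.
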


\begin{remark} 
Let $S$ be a simplicial scheme
 Thus, the sets $\ssieves{S}$ and $\ssch{S}$ are simplicial $S$-motivic sites, and the sets $\sieves{S}$, and $\sch{S}$ are $S$-motivic sites.
 Therefore, we may form their grothendieck rings $\grot{\ssieves{S}}$, $\grot{\ssch{S}}$, $\grot{\sieves{S}}$, and $\grot{\sch{S}}$. 
Moreover, if $\sS$ is a simplicial sieve, $\ssieves{\sS}$ and $\ssch{\sS}$ 
are partial motivic sites. Thus, we may also form their grothendieck rings: $\grot{\ssieves{\sS}}$ and $\grot{\ssch{\sS}}$. 
 \end{remark}

\begin{theorem}\label{simreldis}
 Let $\mot$ be a simplicial $\sS$-motivic site where $\sS$ is any 
 simplicial scheme.  Then, for all $n \in \nat $, $\tau_n(\mot)_{\tau_n(\sS)}$ is a $\tau_n(\sS)$-motivic site. For all $n\in \nat$, 
 there is a surjective
 ring homomorphism $h_n$ from $\grot{\mot}$ to the
 simplicial category of the category corresponding to the grothendieck ring $\grot{\tau_n(\mot)}$. 
\end{theorem}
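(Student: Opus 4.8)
The plan is to treat this as the relative counterpart of Theorem \ref{simdis} and to transcribe that proof almost verbatim, the only substantive changes being that ordinary products are replaced everywhere by fiber products over $\sS$ and that the ring unit $1$ is replaced by $\class\sS$.

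First I would dispose of the site claim. For a simplicial $\sS$-sieve $\sX{\bullet}$, the truncation $\tau_n(\sX{\bullet})$ is its level-$n$ component $\sX n$, regarded as a $\tau_n(\sS)$-sieve by restricting the structure morphism $\sX{\bullet}\to\sS$ and the projection $\sS\times(\affine\fld n)^\circ\to\sS$ to level $n$. The operations $\times$ and $\sqcup$ of Equation \ref{firstop}, the operations $\cup$ and $\cap$ of Equation \ref{secondop}, and the fiber product $\times_{\sS}$ are all computed levelwise in $\set$, so $\tau_n$ commutes with each of them. From this I would deduce that $\tau_n(\mot)_{\tau_n(\sS)}$ is closed under fiber products over $\tau_n(\sS)$, and that for every ambient space $X$ the slice $\tau_n(\mot)_{\tau_n(\sS)}|_X$ is the $\tau_n$-image of $\mot|_X$, hence a complete lattice under $\cup$ and $\cap$ with $\emptyset$ and $X$ as least and greatest elements. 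Thus $\tau_n(\mot)_{\tau_n(\sS)}$ is a $\tau_n(\sS)$-motivic site in the sense of Definition \ref{goob}.

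Next I would construct the ring map exactly as in the proof of Theorem \ref{simdis}. Let $\sC$ be the one-object linear category with endomorphism set $\mor\sC A A$ equal to the underlying set of $\grot{\tau_n(\mot)_{\tau_n(\sS)}}$ and composition given by multiplication, so that the identity of $A$ is $\class{\tau_n(\sS)}$; the covariant functors $\Delta^o\to\sC$ then form a commutative ring under the operations $F+G$ and $F\cdot G$ of Equations \ref{tplus} and \ref{ttimes}, with unit the functor constant at $\class{\tau_n(\sS)}$. Define $h_n$ on $\grot{\mot}$ by sending $\class{\sX{\bullet}}$ to the functor $t\mapsto\class{\sX n}$. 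I would then check, as there, that $h_n(0)=0$, that $h_n(\class\sS)$ is the unit, that $h_n$ annihilates the scissor relations and is additive because $\tau_n$ carries $\sX{\bullet}\cup\sY{\bullet}$ and $\sX{\bullet}\cap\sY{\bullet}$ to $\sX n\cup\sY n$ and $\sX n\cap\sY n$, and that $h_n$ is multiplicative because $\tau_n(\sX{\bullet}\times_{\sS}\sY{\bullet})=\sX n\times_{\tau_n(\sS)}\sY n$. Surjectivity is then immediate: by Definition \ref{relmot} every object of $\tau_n(\mot)_{\tau_n(\sS)}$ is of the form $\sX n$ for some $\sX{\bullet}\in\mot$, so the classes $\class{\sX n}$ generate the target as an abelian group and all lie in the image of $h_n$.

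I expect the only genuinely new verifications, everything else being a routine transcription of Theorem \ref{simdis}, to be the compatibilities with the relative structure: that $\tau_n$ commutes with the fiber product $\times_{\sS}$ and sends the relative unit $\class\sS$ to the unit of the functor ring, and that the slices $\tau_n(\mot)_{\tau_n(\sS)}|_X$ are \emph{complete} lattices, as Definition \ref{goob} requires, and not merely lattices. The first is handled by the levelwise (pointwise) description of the fiber product of sieves, and the second by the existence of arbitrary unions and intersections in $\set$, computed levelwise.
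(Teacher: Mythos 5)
Your proposal is correct and takes exactly the approach the paper intends: the paper's own proof consists only of the sentence "It is clear that $\tau_n(\mot)$ is a $\tau_n(\sS)$-motivic site. The proof is exactly the same as the proof of Theorem \ref{simdis}," and you have simply carried out that transcription, correctly replacing $\times$ by $\times_{\sS}$ and the unit $1$ by $\class{\sS}$, and correctly flagging the one point (the "complete lattice" requirement of Definition \ref{goob} versus the mere "lattice" of Construction \ref{admopen}) where the relative definition genuinely differs.
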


\begin{proof}
 It is clear that $\tau_n(\mot)$ is a $\tau_n(\sS)$-motivic site. The proof is exactly the same as the proof of Theorem \ref{simdis}. 
\end{proof}
\begin{remark}
Let $\sS$ be a simplicial sieve.
 Let $\mot$ be a simplicial $\sS$-motivic site such that the collection of functors $[n] \mapsto \sS\times\lef^{q(n)}$, where $q: \nat \to \nat$
 is any set map,  whose equivalence classes which lie in $\mot$ are such that they form a multiplicatively stable set $S$. 
 By somewhat abusing notation, we denote by $\grot{\mot}_{\lef_{\bullet}}$
 the localisation of $\grot\mot$ at the multiplicatively stable set formed of equivalence classes in $\grot{\mot}$ 
 of elements of $S$.
\end{remark}

\begin{corollary}\label{correldis} Let $\mot$ be a simplicial motivic site where
$\sS$ is any simplicial scieve such that the collection of functors $[n] \mapsto \sS\times\lef^{q(n)}$, where $q: \nat \to \nat$
 is any set map,  whose equivalence classes which lie in $\mot$ are such that they form a multiplicatively stable set $S$. 
 Then for all $n\in\nat$, $h_n(\lef_{\bullet}) = \lef$ 
where $h_n$ is the ring
homomorphism defined in the proof of Theorem \ref{simreldis}.
Thus, for all $n\in \nat$, $h_n$ induces a surjective ring homomorphism from 
 $\grot{\mot}_{\lef_{\bullet}}$ to the simplicial category of the category corresponding to $\grot{\tau_n(\mot)}_{\lef}$. 

\end{corollary}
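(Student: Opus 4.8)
The plan is to run the relative counterpart of the argument behind Corollary~\ref{cordis}, checking three points and then assembling them. First I would unwind the definition of $h_n$ from the proof of Theorem~\ref{simreldis}: it sends the class of a simplicial $\sS$-object $\sX\bullet$ to the functor $t\mapsto\class{\sX n}$, valued in the simplicial category $\sC$ (depending on $n$) of the category corresponding to $\grot{\tau_n(\mot)}$. By the remark following that theorem, $\lef_\bullet$ is the class of the simplicial $\sS$-scheme $[n]\mapsto\sS\times\affine\fld1$, so $h_n(\lef_\bullet)$ is the constant functor $t\mapsto\class{\sS\times\affine\fld1}$. Since $\class{\sS\times\affine\fld1}$ is, by construction of the relative grothendieck ring, exactly the relative Lefschetz class $\lef$ of $\grot{\tau_n(\mot)}$, this constant functor is precisely $\lef$ regarded inside $\sC$; passing to the localizations then yields $h_n(\lef_\bullet)=\lef$ in the simplicial category of $\grot{\tau_n(\mot)}_\lef$, which is the first assertion.

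Next I would show that $h_n$ carries the multiplicatively stable set $S$ generated by the classes $\class{[n]\mapsto\sS\times\lef^{q(n)}}$ into the units of the target. Indeed, $h_n$ of such a class is the functor $t\mapsto\class{\sS\times\affine\fld{q(n)}}=\lef^{q(n)}$, and $\lef$ is invertible in $\grot{\tau_n(\mot)}_\lef$ by the very definition of that localization; hence so is $\lef^{q(n)}$, and therefore so is the corresponding constant functor. By the universal property of localization applied to the ring homomorphism $h_n\colon\grot\mot\to(\text{simplicial category of }\grot{\tau_n(\mot)}_\lef)$, the map factors uniquely through $\grot\mot_{\lef_\bullet}$, which is by definition the localization of $\grot\mot$ at the classes of the elements of $S$; this factorization is the claimed induced homomorphism. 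Surjectivity is then inherited: Theorem~\ref{simreldis} gives that $h_n$ is already surjective onto the simplicial category of $\grot{\tau_n(\mot)}$, and every element of the localized target has the form $a\lef^{-m}$ with $a$ in that image and $\lef^{-m}=h_n(\lef_\bullet^{-m})$, so it lies in the image of the induced map.

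The one point requiring genuine care — and which I expect to be the main obstacle — is the bookkeeping identifying ``the simplicial category of $\grot{\tau_n(\mot)}_\lef$'' with ``the localization of the simplicial category of $\grot{\tau_n(\mot)}$ at the constant functor $\lef$''. This is exactly where the hypothesis that the functors $[n]\mapsto\sS\times\lef^{q(n)}$ have classes forming a multiplicatively stable set $S$ is used: it guarantees that the relevant multiplicative set on the simplicial side consists of central (the ring being commutative) invertible elements and that the two localizations agree, so that ``localize then take functors'' and ``take functors then localize'' coincide. Once this identification is recorded, the remaining verifications are formal and mirror the proof of Corollary~\ref{cordis} essentially word for word.
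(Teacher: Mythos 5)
The paper provides no explicit proof for Corollary~\ref{correldis} (as with Corollary~\ref{cordis}, it is presented as an immediate consequence of the preceding theorem), and your argument supplies exactly the details that are left implicit: unwind $h_n$ from the proof of Theorem~\ref{simreldis}, observe that $h_n$ sends the class of $[m]\mapsto\sS\times\lef^{q(m)}$ to the constant functor valued at $\lef_{\sS_n}^{q(n)}$, invoke the universal property of localization, and inherit surjectivity. Your flagging of the identification between ``localize then take functors to $\sC$'' and ``take functors to $\sC$ then localize at the constant functors'' is the right point of care; the paper treats it silently, so your version is if anything more explicit than the intended argument rather than a divergence from it.
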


\begin{definition} \label{defrelmes}
Let $S$ be a simplicial scheme.
We say that a limit simplicial $S$-sieve $\sX{\star}$ is $S$-{\it measurable} if there is a non-negative real number $Q$ such that
\begin{equation}\label{defrelmes}
\mu_{\fat, \bX, Q}^{\sim}(\sX{\star}) :=  \ulim {\class{\sX \maxim}\lef^{-\lceil Q\cdot\dim {\sarc_{\maxim}{\sX \ }}\rceil}}
\end{equation}
is an element of the image of the diagonal ring homomorphism
\begin{equation}
\grot{\ssieves {S}}_{\lef_{\bullet}} \to \prod_{\sim} \grot{\ssieves {S}}_{\lef_{\bullet}}
\end{equation}
where $\sim$ is the 
ultrafilter on $\bX$ which defines the term on the right-hand side of Equation \ref{defrelmes} as an ultra-limit and $\sX{}$ is the base
of $\sX{\star}$. Here
$\lef^{-\lceil Q\cdot\dim {\sarc_{\maxim}{\sX \ }}\rceil}$ is defined to be the multiplicative inverse of the equivalence class of the simplicial scheme defined by
$[n]$ to $\mathbb{A}_{S}^{-\lceil Q\cdot\dim {\narc_{\maxim}{\sX \ }}\rceil}$. We denote
this element by $\lef^{-\lceil Q\cdot\dim {\sarc_{\maxim}{\sX \ }}\rceil}$.
We temporarily denote the full subcategory of $\limssieves S$ whose objects are all measurable limit simplicial $S$-sieves as $\overline{\smes S}$. 
\end{definition}

\begin{proposition}
 The Definition \ref{relmot} and Definition \ref{defrelmes} are indeed the same. In other words, there is an equivalence of categories
 between $\smes S$ and $\overline{\smes S}$. 
\end{proposition}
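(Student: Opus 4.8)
The plan is to build mutually inverse functors $\Phi\colon\smes S\to\overline{\smes S}$ and $\Psi\colon\overline{\smes S}\to\smes S$, each of which is the identity on the underlying limit simplicial $S$-sieve together with its structure morphism; then the whole content of the proposition is the assertion that the two notions of measurability cut out the same objects of $\limssieves S$, the matching of morphisms being automatic since a morphism of limit simplicial $S$-sieves is by definition a morphism of the underlying limit simplicial sieves that commutes with the structure morphisms. First I would unwind Definition \ref{relmot} in the case $\mot_{\fld}=\smes\fld$: an object of $\smes S$ is a limit simplicial $S$-sieve $\sX{\star}$ --- one lying inside some $\sS_\star\times(\affine\fld{n})^{\circ}$ with base $S$ and a structure morphism over the projection --- whose underlying limit simplicial $\fld$-sieve lies in $\smes\fld$, that is (Definition \ref{defmes}), for which $\mu^{\sim}_{\fat,\bX,Q}(\sX{\star})$ lies in the image of the diagonal homomorphism $\grot{\ssieves{\fld}}_{\lef_{\bullet}}\to\prod_{\sim}\grot{\ssieves{\fld}}_{\lef_{\bullet}}$ for some non-negative real $Q$. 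By Definition \ref{defrelmes}, an object of $\overline{\smes S}$ is a limit simplicial $S$-sieve for which the syntactically identical expression lies in the image of the diagonal homomorphism $\grot{\ssieves{S}}_{\lef_{\bullet}}\to\prod_{\sim}\grot{\ssieves{S}}_{\lef_{\bullet}}$, where now $\lef$ denotes the relative Lefschetz class $\lef_S=\class{S\times\affine\fld{1}}$. Thus the problem reduces to a single claim: for a limit simplicial $S$-sieve $\sX{\star}$ with base $\sX{}$, the sequence $\bigl(\class{\sX\maxim}\,\lef^{-\lceil Q\cdot\dim{\sarc_{\maxim}{\sX{}}}\rceil}\bigr)_{\maxim\in\bX}$ is constant on a set belonging to $\sim$ when computed in $\grot{\ssieves{\fld}}_{\lef_{\bullet}}$ if and only if it is when computed in $\grot{\ssieves{S}}_{\lef_S}$ --- here one uses the elementary fact that a point of an ultrapower $\prod_{\sim}R$ lies in the diagonal image precisely when its underlying sequence is $\sim$-essentially constant.

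To attack the claim I would compare the two Grothendieck rings by means of two natural functors: the forgetful functor $u\colon\ssieves{S}\to\ssieves{\fld}$ that drops the structure morphism, and base change $\sX\mapsto\sX\times_\fld S$ in the opposite direction. Since $\cup$, $\cap$ and $\sqcup$ of sieves are computed pointwise on sets and the scissor relations involve only $\cup$ and $\cap$, the forgetful functor induces a homomorphism of abelian groups $u_{*}\colon\grot{\ssieves{S}}\to\grot{\ssieves{\fld}}$ sending the class of an $S$-sieve to the class of its underlying $\fld$-sieve, while base change is a unital ring homomorphism $b\colon\grot{\ssieves{\fld}}\to\grot{\ssieves{S}}$ sending $\lef_{\bullet}$ to $\lef_S$, and $u_{*}\circ b$ is multiplication by $\class S$. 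Moreover the integer $\lceil Q\cdot\dim{\sarc_{\maxim}{\sX{}}}\rceil$ is the same in both definitions, because $\dim{\sarc_{\maxim}{\sX{}}}$ depends only on the underlying $\fld$-scheme $\sarc_{\maxim}{\sX{}}$, so the two sequences in the claim are assembled from exactly the same dimension data. The discrepancy to be absorbed is that $u_{*}$ is not unital and carries $\lef_S$ to $\class S\cdot\lef_{\bullet}$ rather than to $\lef_{\bullet}$, so that $\grot{\ssieves{S}}$ is localized at the $\lef_S$-powers whereas $\grot{\ssieves{\fld}}$ is localized at the $\lef_{\bullet}$-powers. Using that $\class S$ is the multiplicative unit of $\grot{\ssieves{S}}$ --- so that $\lef_S^{-m}$ is literally the inverse of $\class{S\times\affine\fld{m}}$ and the relative measure already lives in the localization at the $\lef_S$-powers --- I would pass on the $\fld$-side to the further localization at $\class S$, whereupon $u_{*}$ extends to a homomorphism of the localized rings carrying the relative measure $\mu^{\sim}_{\fat,\bX,Q}(\sX{\star})$ to the absolute measure twisted by powers of $\class S$.

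The main obstacle is exactly this twist, together with the structural asymmetry of the two rings: they have different multiplications (product over $\fld$ versus fiber product over $S$), different units, different distinguished Lefschetz classes and different localizing sets, so there is no formal reason for their diagonal images to match, and the twisting powers $\class S^{-\lceil Q\cdot\dim{\sarc_{\maxim}{\sX{}}}\rceil}$ genuinely vary with $\maxim$. I expect the resolution to come from the structure of measurable limit simplicial sieves: after localizing at the Lefschetz class, the class $\class{\sX\maxim}$ is controlled, modulo the scissor relations, by the class of the base $\sX{}$, which already lives over $\fld$, and the defining stability of the sequence $\class{\sX\maxim}\,\lef^{-\lceil Q\cdot\dim{\sarc_{\maxim}{\sX{}}}\rceil}$ forces the $\class S$-twist to be reabsorbed into a single element of the common localization; one must also check that, although $u_{*}$ need not be injective on all of $\grot{\ssieves{S}}$, it does detect $\sim$-essential constancy of the particular sequences that occur here. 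Carrying this out in the style of the proofs of Proposition \ref{taka} and Theorem \ref{simreldis} is the crux of the argument. Once the claim is established, the objects of $\smes S$ and $\overline{\smes S}$ coincide, $\Phi$ and $\Psi$ are mutually inverse, and we obtain the asserted equivalence --- indeed an isomorphism --- of categories.
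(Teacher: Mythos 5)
Your reduction is the right one and matches the structure of the paper's argument: both directions come down to comparing the diagonal images in $\grot{\ssieves S}_{\lef_S}$ and $\grot{\ssieves\fld}_{\lef_{\bullet}}$, with the functors on objects being the identity and the claim being that the two measurability conditions single out the same subcollection of $\limssieves S$. What you do beyond the paper is actually name the candidate comparison maps (the forgetful map $u_*$ and base change $b$) and record precisely why neither is an injective ring homomorphism on the nose: $u_*$ fails to respect the fiber-product multiplication, carries the unit $\class S$ of $\grot{\ssieves S}$ to a non-unit of $\grot{\ssieves\fld}$, sends $\lef_S$ to $\class S\cdot\lef_{\bullet}$, and there is no a priori reason for it to be injective; $b$ goes the wrong way and $u_*\circ b$ is multiplication by $\class S$, so the twist by powers of $\class S$ varies with $\maxim$ and cannot be absorbed formally.

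There is, however, a genuine gap, which you flag yourself: the argument stops at the sentence beginning ``Carrying this out \dots\ is the crux of the argument,'' and the key claim --- that $\sim$-essential constancy of $\bigl(\class{\sX\maxim}\lef^{-\lceil Q\cdot\dim\sarc_{\maxim}\sX{}\rceil}\bigr)$ in $\grot{\ssieves\fld}_{\lef_{\bullet}}$ is equivalent to $\sim$-essential constancy of the syntactically identical sequence in $\grot{\ssieves S}_{\lef_S}$ --- is never established. Worth noting is that the paper's own proof is no more complete on exactly this point: it resolves the issue by asserting, without construction, that there are injective ring homomorphisms $\grot{\ssieves S}\hookrightarrow\grot{\ssieves\fld}$ and $\grot{\sieves{\tau_0(S)}}\hookrightarrow\grot{\sieves\fld}$, and also dispatches the forward containment $\overline{\smes S}\subset\smes\fld$ as holding ``by definition,'' which it does not: membership in $\smes\fld$ is a constraint on the diagonal of $\grot{\ssieves\fld}_{\lef_{\bullet}}$, not a tautology about the underlying functor. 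Your analysis in fact exhibits why the asserted map cannot simply be the forgetful one. So the gap you acknowledge is real, but it is also a gap in the source: to finish, one would need to construct an explicit group (or ring) homomorphism between the two localized Grothendieck rings that (i) sends $\class{\sX\maxim}$ to $\class{\sX\maxim}$, (ii) sends the localizing element on one side into the localizing set on the other, and (iii) is injective (or at least detects constancy of the particular sequences appearing in $\mu^{\sim}_{\fat,\bX,Q}$); none of $u_*$, $b$, or the map stated in the paper is shown to do all three.
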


\begin{proof} Let $S$ be any simplicial scheme. 
Note that $\overline{\smes S}$ includes into $\smes \fld$ and $\limssieves S$ by definition. Thus, $\overline{\smes S}$ includes into
$\smes S$. Thus, we are left with showing that if $\sX{\star}$ is an element of $\smes S$, then it satisfies Definition \ref{defrelmes}.
This follows from the fact that there are injective ring homomorphisms 
\begin{equation}
\begin{split}
 \grot{\ssieves{S}} \hookrightarrow \grot{\ssieves\fld} \\
 \grot{\sieves{\tau_0(S)}} \hookrightarrow \grot{\sieves\fld} \ . \\
 \end{split}
\end{equation}
\end{proof}

\begin{definition}
 Let $\mot$ be a simplicial (partial) $\sS$-motivic site where $\sS$ is a simplicial sieve. We denote by $\lmot$ the full
 subcategory of $\limssieves \sS$ formed of all
 limit simplicial $\sS$-sieves whose base is in $\mot$. We denote by $\mmot$ the full
 subcategory of $\smes \sS$ formed of all
 limit simplicial $\sS$-sieves whose base is in $\mot$.  In particular, we have natural containments
 \begin{equation}
  \mot \subset \mmot \subset \lmot \ .
 \end{equation}

\end{definition}

\begin{example}
 As a trivial example, it is easy to see that the category $\categ{LimsSieves}_{\sS}$ is naturally isomorphic to $\limsieves{\sS}$
 and $\categ{MessSieves}_{\sS}$ is naturally isomorphic to $\smes{\sS}$ for
 any simplicial sieve $\sS$. However, it should be noted that $\categ{LimsSch}_{\sS}$ properly contains $\limssch \sS$. 
\end{example}

\begin{proposition} If $\mot$ is  simplicial (partial) $\sS$-motivic site where $\sS$ is a simplicial sieve. Then,
 $\lmot$ and $\mmot$ are limit simplicial (partial) $\sS$-motivic sites. 
\end{proposition}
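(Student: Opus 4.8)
The plan is to verify directly the two defining conditions of a limit simplicial (partial) $\sS$-motivic site from Definition~\ref{goob}: closure under products, and the (complete) lattice condition for $\mot|_X$ over each supporting limit simplicial $\sS$-scheme $X$. The guiding observation is that every operation in sight — the product $\times$ (resp. the relative product $\times_{\sS}$), the operations $\cup$ and $\cap$, and the passage from a limit simplicial sieve to its base — is computed pointwise, levelwise in $\Delta^o$ and sectionwise on $\fatpoints\fld$, by Equations~\ref{firstop} and~\ref{secondop} together with Remark~\ref{firstanl}. Consequently these operations all commute with taking the base, so the condition ``base lies in $\mot$'' is preserved by each of them as soon as $\mot$ itself is closed under the corresponding operation. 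Since $\lmot$ (resp. $\mmot$) is a full subcategory of $\limssieves\sS$ (resp. of $\smes\sS$), which has already been shown to be a limit simplicial $\sS$-motivic site (resp. a partial one supported on the measurable limit simplicial $\sS$-schemes), the ambient lattice and product structure is at our disposal and only the base condition requires checking.

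For closure under products I would take $\sX{\star},\sY{\star}\in\lmot$ with bases $\sX,\sY\in\mot$. Running the product construction for limit simplicial sieves exactly as in the proof that $\mot_{\sS}$ is closed under products — choosing the point system formed by the fibre products $\maxim\times_{\fld}\maxim'$ — exhibits $\sX{\star}\times\sY{\star}$ (resp. $\sX{\star}\times_{\sS}\sY{\star}$) as a limit simplicial $\sS$-sieve whose base is $\sX\times\sY$ (resp. $\sX\times_{\sS}\sY$), again computed pointwise. Because $\mot$ is a simplicial $\sS$-motivic site it is closed under products, so this base lies in $\mot$; hence $\sX{\star}\times\sY{\star}\in\lmot$. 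For $\mmot$ one argues identically and additionally invokes closure of $\smes\sS$ under products to see that the result lands in $\smes\sS$, hence in $\mmot$.

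For the lattice condition I would fix a supporting limit simplicial $\sS$-scheme $X$ — for $\lmot$ one whose base $X_0$ is an object over which $\mot|_{X_0}$ is a (complete) lattice, and for $\mmot$ one that is moreover measurable — and take $\sX{\star},\sY{\star}\in\lmot|_X$ with bases $\sX,\sY\in\mot|_{X_0}$. By Remark~\ref{firstanl} the sieves $\sX{\star}\cup\sY{\star}$ and $\sX{\star}\cap\sY{\star}$ are formed pointwise, and so are their bases, which are therefore $\sX\cup\sY$ and $\sX\cap\sY$. Since $\mot|_{X_0}$ is a (complete) lattice these bases lie in $\mot$; since $\limssieves\sS|_X$ (resp. $\smes\sS|_X$) is a complete lattice the unions and intersections themselves lie there; hence $\sX{\star}\cup\sY{\star},\sX{\star}\cap\sY{\star}\in\lmot|_X$ (resp. $\in\mmot|_X$). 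The same pointwise argument handles arbitrary families, giving completeness, and $\emptyset$ and $X$ are visibly the minimum and maximum. This would establish that $\lmot$ and $\mmot$ are limit simplicial (partial) $\sS$-motivic sites.

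The step I expect to carry the real weight is the lattice condition for $\mmot$, since it secretly requires that measurability be stable under $\cup$ and $\cap$; but this stability is precisely what is packaged into the earlier statement that $\smes\sS$ is a partial $\sS$-motivic site — the additivity $\mu(\sX{\star}\cup\sY{\star})+\mu(\sX{\star}\cap\sY{\star})=\mu(\sX{\star})+\mu(\sY{\star})$, and the compatibility of the dimension exponent with the scissor relation, are already absorbed there. Hence no genuinely new obstacle arises, and the proof reduces to the bookkeeping of tracking the base through each construction and appealing to the already-established ambient sites.
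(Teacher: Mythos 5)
Your proof is correct and takes essentially the same route as the paper, which dispatches this proposition with ``This follows directly from the definitions''; you have simply unpacked that phrase by tracking the base through the pointwise product, union, and intersection constructions and invoking closure of $\mot$ and the ambient site structure of $\limssieves\sS$ (resp.\ $\smes\sS$). The observation that the base commutes with all the relevant operations is exactly the content the paper leaves implicit.
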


\begin{proof}
 This follows directly from the definitions.
\end{proof}

Let $\mot$ be a limit simplicial (partial) $\sS$-motivic site for some simplicial scheme $\sS$. There is a functor from $\mot$ to 
$\ssieves \sS$ which associates to each object $\sX{\star}$ in $\mot$ the base $\sX{}$ of $\sX{\star}$. We denote the image of this functor by $\bmot$.

\begin{proposition} If $\mot$ is  limit simplicial (partial) $\sS$-motivic site where $\sS$ is a simplicial sieve. Then,
 $\bmot$ is a  simplicial (partial) $\sS$-motivic sites. 
\end{proposition}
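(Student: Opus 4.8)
The plan is to check directly that $\bmot$ meets the two conditions defining a simplicial (partial) $\sS$-motivic site (the simplicial analogue of Definition~\ref{goob} together with Example~\ref{limscvex}): closure under the relative product $\times_{\sS}$, and the requirement that $\bmot|_X$ be a (complete) lattice under $\cup$ and $\cap$ for every $X\in\ssch\sS$; in the partial case one checks additionally that $\bmot$ is supported on the simplicial $\sS$-schemes lying in it. The running observation is that the base assignment $\beta\colon\sX{\star}\mapsto\sX{}$ is functorial by construction — a morphism $f_{\star}$ in $\limssieves\sS$ is by definition induced by a morphism $f$ of bases — and that it takes values in $\ssieves\sS$: applying $\beta$ to the structure morphism $j\colon\sX{\star}\to\sS_{\star}$ and to the defining inclusion $\sX{\star}\subset\sS_{\star}\times(\affine{\fld}{n})^{\circ}$, and using that $\sarc_{\spec\fld}$ is the identity (Example~\ref{limscvex}), recovers the $\sS$-sieve structure $\sX{}\subset\sS\times(\affine{\fld}{n})^{\circ}$ with structure morphism $\beta(j)$ commuting with the projection.

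First I would handle products. If $\sX{\star},\sY{\star}\in\mot$ have bases $\sX{}$ and $\sY{}$, then $\sX{\star}\times_{\sS}\sY{\star}\in\mot$ because $\mot$ is a limit simplicial $\sS$-motivic site; and, exactly as in the proof that $\mot_{\sS}$ is closed under products — where the product point system $\bX''=\bX\times\bX'$ is used and the product is seen to have base $\sS$ — one reads off that the base of $\sX{\star}\times_{\sS}\sY{\star}$ is $\sX{}\times_{\sS}\sY{}$, since fibre products of (limit) simplicial sieves are computed pointwise on sets and $\projlim$ over $\bX''$ of the $\sX{\maxim}\times_{\sarc_{\maxim}\sS}\sY{\maxim'}$ has base $\sX{}\times_{\sS}\sY{}$. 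Hence $\sX{}\times_{\sS}\sY{}\in\bmot$.

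Next, the lattice property. Fix $X\in\ssch\sS$ and $\sX{},\sY{}\in\bmot|_X$, and choose lifts $\sX{\star},\sY{\star}\in\mot$ with bases $\sX{}\subset X$, $\sY{}\subset X$ attached to limit points $\fat=\injlim\bX$, $\fat'=\injlim\bX'$. Since $\sarc_{\bullet}$ is a functor, $\sX{\maxim}\into\sarc_{\maxim}\sX{}\into\sarc_{\maxim}X$, so $\sarc_{\fat}X\in\limssch\sS$ is an ambient space of $\sX{\star}$, and likewise of $\sY{\star}$; after replacing the two lifts by lifts attached to a common limit point — via the product point system, as above — we may assume both lie in $\mot|_{\sarc_{\fat}X}$, a complete lattice. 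Then $\sX{\star}\cup\sY{\star}$ and $\sX{\star}\cap\sY{\star}$ lie in $\mot$, and, $\cup$ and $\cap$ being computed pointwise on underlying sets and hence compatible with $\beta$, their bases are $\sX{}\cup\sY{}$ and $\sX{}\cap\sY{}$, which therefore lie in $\bmot|_X$; the empty simplicial sieve and $X$ are the minimum and maximum. For the partial statement one uses that the base of a limit simplicial $\sS$-scheme $\sarc_{\fat}X_0$ is the simplicial $\sS$-scheme $X_0\in\ssch\sS$, so $\beta$ carries a supporting family for $\mot$ to one for $\bmot$, and distributivity of each $\bmot|_X$ is inherited from that of $\mot|_{\sarc_{\fat}X}$.

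The main obstacle is this lattice step: one must be certain that the join and meet formed at the level of bases are genuinely realized by objects of $\mot$, i.e.\ that $\cup$ and $\cap$ of bases of $\mot$-objects are again bases of $\mot$-objects. The delicate point is that two lifts are a priori attached to different limit points and point systems, and one needs to replace them by lifts over a common limit point without leaving $\mot$; this is precisely where the product point system is invoked, and it is the only place in the argument that goes beyond a formal unwinding of the definitions.
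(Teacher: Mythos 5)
The paper offers no explicit proof of this proposition --- like the neighboring propositions it is implicitly treated as ``immediate'' --- so there is no official argument to compare yours against. Your verification of the product axiom is fine: closure of $\mot$ under $\times_{\sS}$ together with the compatibility of the base functor with the pointwise, product-point-system construction does give $\sX{}\times_{\sS}\sY{}\in\bmot$.

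The lattice step, however, has a genuine gap, precisely where you flag it, and the device you invoke does not close it. After lifting $\sX{},\sY{}\in\bmot|_X$ to $\sX{\star},\sY{\star}\in\mot$, these lifts are built from point systems $\bX$ and $\bX'$ that are a priori unrelated; so the two limit sieves need not land in a common $\mot|_{X'}$ with $X'\in\limssch\sS$, and the lattice axiom for $\mot$ cannot be applied to form $\sX{\star}\cup\sY{\star}$ or $\sX{\star}\cap\sY{\star}$ inside $\mot$. The phrase ``replace the two lifts by lifts attached to a common limit point via the product point system'' is not licensed by the definition of a limit simplicial $\sS$-motivic site: $\mot$ is only assumed closed under products and to be a lattice within each $\mot|_{X'}$, not closed under re-indexing a given lift over a finer point system. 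Concretely, the product-point-system construction you borrow produces an object whose base is $\sX{}\times_{\sS}\sY{}$, not a fresh lift of $\sX{}$ alone over $\bX\times\bX'$; the natural candidate $\sX{\star}\times_{\sS}\sarc_{\fat'}\sS$ does have base $\sX{}$, but nothing in the hypotheses forces $\sarc_{\fat'}\sS$ (hence that product) to lie in $\mot$. So either the proposition needs an additional standing hypothesis --- e.g.\ that $\mot$ is closed in the paper's later sense, or that it contains $\sarc_{\fat}\sS$ for every relevant limit point $\fat$ --- or a different argument is required. Since the paper states the result without proof, this gap appears to be present in the source as well; you were right to single it out as the delicate point.
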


\begin{proposition}
 Let $\sS$ be a simplicial scheme. Let $\mot$ be a simplicial (partial) $\sS$-motivic site. Then, the limit simplicial (partial)
 $\sS$-motivic site $\categ{BaseLim}\mot$ is natural isomorphic to $\mot$. Moreover, if $\mot$ is a limit simplicial (partial) $\sS$-motivic sites,
 then there is a natural inclusion
 \begin{equation}
  \mot \subset \categ{LimBase}\mot \ . 
 \end{equation}
\end{proposition}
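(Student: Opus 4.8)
The plan is to reduce both assertions to an unwinding of the definitions, the one genuinely useful input being Example~\ref{limscvex}: taking the fat point $\maxim=\spec\fld$ one has $\sarc_{\spec\fld}\sX{}=\sX{}$, so a simplicial $\sS$-sieve $\sX{}$ may be regarded as the finite limit simplicial $\sS$-sieve $\sarc_{\spec\fld}\sX{}$, whose base is $\sX{}$ itself. This is precisely the full embedding $\ssieves\sS\subset\limssieves\sS$ underlying the containment $\mot\subset\lmot$ recorded earlier for a simplicial site, and it drives both claims. Throughout I read $\categ{BaseLim}\mot$ as $\categ{Base}$ applied to $\categ{Lim}\mot=\lmot$, and $\categ{LimBase}\mot$ as $\categ{Lim}$ applied to $\categ{Base}\mot=\bmot$.

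For the first assertion, fix a simplicial (partial) $\sS$-motivic site $\mot$ and let $\iota\colon\mot\hookrightarrow\lmot$ be the embedding just described. I would first observe that the composite of $\iota$ with the base functor $\categ{Base}\colon\lmot\to\ssieves\sS$ is the identity on $\mot$: it sends $\sX{}$ to the base of $\sarc_{\spec\fld}\sX{}$, which is $\sX{}$ again, and returns each morphism of $\mot$ unchanged. Hence $\mot$ is contained in $\categ{BaseLim}\mot$, the image of $\categ{Base}$ on $\lmot$. Conversely, by the very definition of $\categ{Lim}\mot$ every object of $\lmot$ has base in $\mot$, so $\categ{Base}(\lmot)\subset\mot$, and fullness of $\lmot$ in $\limssieves\sS$ shows every morphism of $\mot$ is hit, again via $\iota$. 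Thus $\categ{BaseLim}\mot$ coincides with $\mot$ as a subcategory of $\ssieves\sS$, which a fortiori gives a natural isomorphism $\categ{BaseLim}\mot\iso\mot$.

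For the second assertion, fix a limit simplicial (partial) $\sS$-motivic site $\mot$ and let $\bmot=\categ{Base}\mot$ be its base site, which is a simplicial (partial) $\sS$-motivic site by the preceding proposition, so that $\categ{LimBase}\mot=\categ{Lim}(\bmot)$ makes sense. Given $\sX{\star}\in\mot$, its base $\sX{}$ is an object of $\bmot$ by construction of $\bmot$ as the image of the base functor; but then $\sX{\star}$ is a limit simplicial $\sS$-sieve whose base lies in $\bmot$, which is exactly the membership condition defining $\categ{LimBase}\mot$. Fullness of $\categ{LimBase}\mot$ in $\limssieves\sS$ promotes this to an inclusion of categories $\mot\subset\categ{LimBase}\mot$. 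I would close by noting that this inclusion is generally strict: $\categ{LimBase}\mot$ contains \emph{every} limit simplicial $\sS$-sieve, over \emph{every} point system, whose base lands in $\bmot$, whereas $\mot$ need contain only some of these.

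The main obstacle is bookkeeping rather than mathematics: one must keep the three maps --- the base functor, the construction $\categ{Lim}(-)$, and the canonical full embedding of Example~\ref{limscvex} --- consistently aligned, and carefully distinguish ``$\categ{BaseLim}\mot=$ image of the base functor'' (forcing \emph{equality} in the first claim) from ``$\categ{LimBase}\mot=$ full subcategory cut out by a condition on bases'' (yielding only an \emph{inclusion} in the second). One should also verify that the base functor respects $\cup$, $\cap$, $\sqcup$ and $\times$, so that $\categ{BaseLim}\mot$ and $\categ{LimBase}\mot$ are themselves (partial) motivic sites in the required sense; this is immediate, since all four operations are computed pointwise on sets.
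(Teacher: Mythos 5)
The paper does not give a proof of this proposition (it follows a proposition on $\bmot$ that is also stated without proof, and is immediately followed by a definition), so there is no paper argument to compare against; the author evidently regards it as a direct unwinding of the definitions, and your proposal is essentially the natural such unwinding. Your second assertion is handled correctly: for $\sX{\star}\in\mot$ the base $\sX{}$ lies in $\bmot$ by construction, so $\sX{\star}$ satisfies the membership condition defining $\lmot$ applied to $\bmot$, and fullness of that subcategory of $\limssieves\sS$ gives the inclusion on morphisms as well.

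For the first assertion, however, your closing step is too quick. You correctly establish that $\categ{Base}\circ\iota=\mathrm{id}_{\mot}$ and therefore $\mot\subset\categ{BaseLim}\mot$, and that the two sides agree on objects since every object of $\lmot$ has base in $\mot$ by definition. But the phrase ``so $\categ{Base}(\lmot)\subset\mot$'' is only justified on objects, and the subsequent ``Thus $\categ{BaseLim}\mot$ coincides with $\mot$'' does not follow without a further check on morphisms. Recall that $\lmot$ is defined as the \emph{full} subcategory of $\limssieves\sS$ on objects with base in $\mot$, and a morphism $f_{\star}$ in $\limssieves\sS$ is specified by an arbitrary morphism $f$ of simplicial $\sS$-sieves between the bases. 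Consequently $\categ{BaseLim}\mot$, as the image of the base functor on $\lmot$, is the full subcategory of $\ssieves\sS$ on the objects of $\mot$; this equals $\mot$ only when $\mot$ itself is full in $\ssieves\sS$, a hypothesis the paper's definition of a (partial) $\sS$-motivic site does not impose. So either one must read the proposition with an implicit fullness convention, or the ``isomorphism'' must be understood merely as a canonical bijection on objects compatible with $\cup$, $\cap$, and $\times$ (which, as you note, the base functor does respect since those operations are computed pointwise). Flagging this explicitly, and either adding the fullness hypothesis or restating the conclusion at the object/lattice level, would close the gap.
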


\begin{definition}
 Let $\sS$ be a simplicial scheme and let $\mot$ be a limit simplicial (partial) $\sS$-motivic site. We call the 
 the limit simplicial (partial) $\sS$-motivic site
 $\categ{LimBase}\mot$ the {\it closure} of $\mot$ and denote it by $\bar\mot$. We say that $\mot$ is {\it closed} if $\mot = \bar\mot$. 
\end{definition}

Let $f: \sS \to \sS^{\prime}$ be a morphism in $\ssieves\fld$. We then have induced functors
\begin{equation}
 \begin{split}
  f_{*} : \limssieves{\sS} \to \limssieves{\sS^{\prime}}&, \ [\sX{\star} \to \sS_{\star}] \mapsto [\sX{\star}\to (\sS^{\prime})_{\star}] \\
  f^{*} : \limssieves{\sS^{\prime}} \to \limssieves{\sS}&, \ [\sX{\star} \to (\sS^{\prime})_{\star}] \mapsto [\sX{\star}\times_{\sS^{\prime}}\sarc_{\fat}\sS \to \sarc_{\fat}\sS]  
 \end{split}
\end{equation}

\begin{proposition} \label{j1}
 $f_{*}$ is the left-adjoint to $f^{*} .$ 
\end{proposition}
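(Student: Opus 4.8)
The proposition is the familiar comma-category adjunction --- post-composition along $f$ is left adjoint to pullback along $f$ --- transplanted into the setting of limit simplicial sieves. The plan is to fix $\sX{\star} \in \limssieves{\sS}$ with structure morphism $j : \sX{\star} \to \sS_{\star}$ and $\sY{\star} \in \limssieves{\sS^{\prime}}$ with structure morphism $k : \sY{\star} \to (\sS^{\prime})_{\star}$, and to exhibit a bijection
\[
\hom{\limssieves{\sS^{\prime}}}{f_{*}\sX{\star}}{\sY{\star}} \;\cong\; \hom{\limssieves{\sS}}{\sX{\star}}{f^{*}\sY{\star}}
\]
natural in both variables. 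First I would unwind the two sides. By the definition of $f_{*}$, an element of the left-hand set is a morphism $\varphi : \sX{\star} \to \sY{\star}$ of limit simplicial sieves such that $k \circ \varphi$ equals the composite of $j$ with the morphism $\sS_{\star} \to (\sS^{\prime})_{\star}$ induced by $f$ through the functoriality of the limit arc operator. By the definition of $f^{*}$, an element of the right-hand set is a morphism $\psi : \sX{\star} \to f^{*}\sY{\star} = \sY{\star} \times_{\sS^{\prime}} \sarc_{\fat}\sS$ whose composite with the projection onto $\sarc_{\fat}\sS$ is the structure morphism $j$.

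The bijection is then the evident one. Given $\varphi$, the hypothesis that $\varphi$ be compatible with the $\sS^{\prime}$-structure morphisms is precisely what the universal property of the fiber product needs in order to produce a morphism $(\varphi, j) : \sX{\star} \to \sY{\star} \times_{\sS^{\prime}} \sarc_{\fat}\sS$; conversely, given $\psi$, the requirement that $\psi$ lie over $\sarc_{\fat}\sS$ forces its $\sarc_{\fat}\sS$-component to be $j$, so that $\psi$ is recovered from $\op{pr}_{\sY{\star}} \circ \psi$. These assignments are visibly mutually inverse. Since fiber products of limit simplicial sieves are formed levelwise in $\sset$ and pointwise over $\fatpoints{\fld}$, where fiber products of sets exist, the universal property invoked here is legitimate, and naturality in $\sX{\star}$ and $\sY{\star}$ follows from its uniqueness clause exactly as in the classical slice-category argument; equivalently, one may write down the unit $(\mathrm{id}, j) : \sX{\star} \to f^{*}f_{*}\sX{\star}$ and the counit $\op{pr}_{\sY{\star}} : f_{*}f^{*}\sY{\star} \to \sY{\star}$ and check the triangle identities, which again reduces to the same universal property. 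In either presentation I would record that neither $f_{*}$ nor $f^{*}$ alters the underlying point system $\bX$, so that the auxiliary map of point systems and the commuting squares with the inclusions into $\sarc_{\maxim}(-)$ attached to a morphism of limit simplicial sieves (Definition \ref{deflimss} and the description of $\limssieves{\fld}$ following it) transport across the bijection unchanged.

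The step I expect to be the main obstacle is a matter of bookkeeping rather than of ideas: verifying that $f^{*}$ genuinely lands in $\limssieves{\sS}$, i.e.\ that $\sY{\star} \times_{\sS^{\prime}} \sarc_{\fat}\sS$ is again a limit simplicial sieve with base $\sS$ in the sense of Definition \ref{deflimss}. To settle this I would write $\sarc_{\fat}\sS = \projlim_{\maxim \in \bX} \sarc_{\maxim}\sS$, use that the pointwise set-level fiber product commutes with these projective limits to get
\[
\sY{\star} \times_{\sS^{\prime}} \sarc_{\fat}\sS \;=\; \projlim_{\maxim \in \bX}\bigl(\sY{\maxim} \times_{\sS^{\prime}} \sarc_{\maxim}\sS\bigr),
\]
and observe that each term of this inverse system is naturally a simplicial subsieve of $\sarc_{\maxim}\sS \times (\affine{\fld}{m})^{\circ}$ lying over $\sarc_{\maxim}\sS$, with face and degeneracy maps inherited levelwise from those of $\sY{\star}$ and $\sS$; hence the inverse limit is a limit simplicial $\sS$-sieve with structure morphism the projection to $\sarc_{\fat}\sS$, as required by Definitions \ref{deflimss} and \ref{relmot}. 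Granting this together with the routine verifications above, the displayed bijection is the sought adjunction isomorphism.
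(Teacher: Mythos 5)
Your argument is correct and is essentially the same as the paper's: both are the standard comma/slice-category adjunction, with the forward map obtained from the universal property of the fiber product (the paper phrases this as applying $f^*$ and then invoking ``the definition of fiber product,'' which amounts to composing with the graph morphism $(\mathrm{id},j):\sX{\star}\to\sX{\star}\times_{\sS'}\sarc_{\fat}\sS$) and the inverse given by projection onto $\sY{\star}$, using $\sY{\star}\times_{\sS'}\sarc_{\fat}\sS'\cong\sY{\star}$. You are somewhat more careful than the paper in two respects worth keeping: you explicitly verify that $f^{*}\sY{\star}$ is indeed a limit simplicial $\sS$-sieve in the sense of Definition~\ref{deflimss} (the paper takes this for granted when defining $f^*$), and you record that the point system and the commuting squares with the inclusions into $\sarc_{\maxim}(-)$ are carried across the bijection unchanged.
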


\begin{proof}
 Let $\sX{\star} \in \limssieves{\sS}$ and let $\sY{\star} \in \limssieves{\sS^{\prime}}$. We send a morphism
 $t : f_*(\sX{\star}) \to \sY{\star}$ to the morphism obtained by applying the functor $f^*$. Thus, 
 \begin{equation}
  [t : f_*(\sX{\star}) \to \sY{\star}] \mapsto  [f^*t : \sX{\star}\times_{\sS^{\prime}}\sarc_{\fat}\sS \to \sY{\star}\times_{\sS^{\prime}}\sarc_{\fat}\sS]
 \end{equation}
By definition of fiber product, this induces a morphism from $\sX{\star}$ to $f^*(\sY{\star})$. 
Now, we construct an inverse to this set map. We send a morphism $s : \sX{\star} \to f^*(\sY{\star})$ to 
the morphism obtained by applying the functor $f_*$. Thus, $s$ is mapped to the $(id\times f) \circ s$ which is a morphism
from $\sX{\star}$ to $\sY{\star} \times_{\sS^{\prime}} \sarc_{\fat}\sS^{\prime} \cong \sY{\star}$.
It is a straightforward exercise to show that the bijection here is natural. 
\end{proof}

\begin{definition}
 Let $\sS$ and $\sS^{\prime}$ be simplicial sieves. Let $\mot_{\sS}$ (resp., $\mot_{\sS'}$) be a limit simplicial (partial) 
 $\sS$-motivic site (resp., $\sS^{\prime}$-motivic site). We say that a functor 
 \begin{equation}
  F: \mot_{\sS} \to \mot_{\sS^{\prime}}
 \end{equation}
is a {\it morphism of (partial) motivic sites} if $F(\sS{}) = \sS{}'$, $F(\emptyset)=\emptyset$, preserves products, and for all limit simplicial
$\sS$-schemes $X$, the functor 
\begin{equation}
 F|_X : \mot_{\sS}|_X \to \mot_{\sS^{\prime}}|_{\overline{F(X)}}
\end{equation}
is a morphism of lattices where $\overline{F(X)}$ is the zariski closure\footnote{This is defined as the intersection of all limit simplicial 
schemes containing $F(X)$.} of $F(X)$. 
A bijective morphism of (partial) motivic sites whose set-theoretic inverse is
also a morphism of (partial) motivic sites which also defines an equivalence of categories is called an {\it isomorphism
of motivic sites.} 
\end{definition}

\begin{theorem} \label{j2}
 Let $\sS$ and $\sS^{\prime}$ be simplicial sieves. Let $\mot_{\sS}$ (resp., $\mot_{\sS'}$) be a limit simplicial (partial) 
 $\sS$-motivic site (resp., $\sS^{\prime}$-motivic site). 
 Let $F : \mot_{\sS} \to \mot_{\sS^{\prime}}$ be a morphism of motivic sites. This induces a ring homomorphism
 \begin{equation}
  F : \grot{\mot_{\sS}} \to \grot{\mot_{\sS^{\prime}}} \ .
 \end{equation}
If $F$ is an surjective (resp., injective), then so is the induced ring homomorphism. Moreover, 
if $F$ is an isomorphism of motivic sites, then then the induced ring homomorphism is an isomorphism of rings.
\end{theorem}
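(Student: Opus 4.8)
The plan is to construct the induced map on Grothendieck rings directly from the generators-and-relations presentation, then check that the properties of $F$ as a morphism of motivic sites translate into the required properties of the ring homomorphism. First I would define $F : \grot{\mot_{\sS}} \to \grot{\mot_{\sS^{\prime}}}$ on generators by $F(\sym{\sX{\star}}) := \sym{F(\sX{\star})}$ and extend additively. To see this is well-defined I must check two things: that $F$ respects $\mot_{\sS}$-homeomorphisms (so that isomorphic objects are sent to isomorphic objects), and that $F$ annihilates the scissor relations. The first is immediate since $F$ is a functor defining an equivalence onto its image when restricted appropriately; more precisely, a $\mot_{\sS}$-homeomorphism is a continuous bijection with continuous inverse, and since $F|_X$ is a morphism of lattices for every ambient $X$, it preserves the lattice-theoretic data that continuity is built from, hence carries homeomorphisms to homeomorphisms. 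The second is the content of the hypothesis that $F|_X$ is a morphism of lattices: if $\sX{\star}$ and $\sY{\star}$ share ambient space $X$, then $F(\sX{\star})$ and $F(\sY{\star})$ share ambient space $\overline{F(X)}$, and $F(\sX{\star}\cup\sY{\star}) = F(\sX{\star})\cup F(\sY{\star})$, $F(\sX{\star}\cap\sY{\star}) = F(\sX{\star})\cap F(\sY{\star})$, so the image of a scissor relation is again a scissor relation and maps to $0$.

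Next I would verify multiplicativity and the unit. Since $F$ preserves products, $F(\sX{\star}\times_{\sS}\sY{\star}) = F(\sX{\star})\times_{\sS^{\prime}}F(\sY{\star})$, which gives $F(\class{\sX{\star}}\cdot\class{\sY{\star}}) = F(\class{\sX{\star}})\cdot F(\class{\sY{\star}})$ by the definition of multiplication in the Grothendieck rings recalled in the proof of the previous theorem. The hypothesis $F(\sS{}) = \sS{}'$ ensures $F(\class{\sS}) = \class{\sS^{\prime}}$, i.e. $F$ sends the multiplicative unit to the multiplicative unit. Together with the additive well-definedness this shows $F$ is a ring homomorphism.

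For the surjectivity and injectivity statements I would argue as follows. If $F : \mot_{\sS} \to \mot_{\sS^{\prime}}$ is surjective (on objects, up to homeomorphism), then every generator $\sym{\sY{\star}}$ of $\grot{\mot_{\sS^{\prime}}}$ is hit, so the ring map is surjective. If $F$ is injective (and, as part of being a morphism of sites with a morphism-of-sites inverse, reflects homeomorphisms), then distinct classes of generators go to distinct classes; the subtlety is that injectivity of a map on generators does not automatically give injectivity on the free-abelian-group-mod-relations quotient, so here I would use the bijective-inverse hypothesis: the set-theoretic inverse $F^{-1}$ is itself a morphism of motivic sites by assumption, hence induces a ring homomorphism $\grot{\mot_{\sS^{\prime}}} \to \grot{\mot_{\sS}}$ by the construction just given, and the two induced maps are mutually inverse because they are mutually inverse on generators and both are additive. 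This simultaneously settles the final clause: if $F$ is an isomorphism of motivic sites, the induced ring map has a two-sided inverse and is therefore an isomorphism of rings.

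The main obstacle is the well-definedness of $F$ on the quotient ring, specifically confirming that $F$ carries scissor relations to scissor relations rather than to some more general combination: this requires that $F(\sX{\star})$ and $F(\sY{\star})$ genuinely share an ambient space in $\mot_{\sS^{\prime}}$, which is exactly why the definition of morphism of motivic sites is phrased via the lattices $\mot_{\sS}|_X \to \mot_{\sS^{\prime}}|_{\overline{F(X)}}$ with $\overline{F(X)}$ the Zariski closure; I would need to check that $\overline{F(X)}$ is indeed a legitimate limit simplicial $\sS^{\prime}$-scheme (which it is, being an intersection of such, by the footnote in the definition) so that the lattice $\mot_{\sS^{\prime}}|_{\overline{F(X)}}$ makes sense and the relation lands inside it. Everything else is a routine unwinding of the Grothendieck-ring construction recalled earlier in the paper.
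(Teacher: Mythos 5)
The paper's own proof is a one-liner (``This is immediate''), so there is nothing of substance to compare against; what matters is whether your unwinding is sound. Your construction of the ring homomorphism on generators, the verification that it annihilates scissor relations via the lattice-morphism hypothesis, and the check of multiplicativity and unit via preservation of products and $F(\sS{}) = \sS{}'$ are all correct, and this is surely what the author has in mind. The surjectivity clause is also handled correctly: surjectivity of $F$ on objects (up to homeomorphism) gives surjectivity on generators, hence on the quotient ring.

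The injectivity clause, however, is not actually proved. You correctly identify the key obstruction — that injectivity on generators does not descend to the quotient of the free abelian group by scissor relations, since the target Grothendieck ring may impose new relations among images that do not lift to relations in the source — but you then resolve it by invoking the ``bijective-inverse hypothesis,'' i.e.\ that $F$ has a set-theoretic inverse which is itself a morphism of motivic sites. That is precisely the hypothesis of the \emph{isomorphism} clause, not the injectivity clause. Under the hypothesis that $F$ is merely injective, there is no inverse morphism available, and your argument collapses to a proof of only the final sentence of the theorem. As stated, the theorem asserts that injective $F$ gives injective $\grot{\mot_{\sS}} \to \grot{\mot_{\sS^{\prime}}}$, and your proposal contains no argument establishing this; indeed the obstruction you yourself raise suggests the claim needs either a sharper hypothesis or a genuinely different argument (e.g.\ exhibiting a retraction, or showing that every scissor relation among images of generators lifts). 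A second, smaller point: your claim that $F$ carries $\mot_{\sS}$-homeomorphisms to $\mot_{\sS'}$-homeomorphisms because lattice morphisms ``preserve the lattice-theoretic data that continuity is built from'' is plausible but not airtight — continuity is a statement about preimages of admissible opens under a natural transformation, not merely about the lattice of subsieves — and deserves a sentence of justification rather than an appeal to intuition, since well-definedness of the map on generators rests on it.
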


\begin{proof} 
 This is immediate.
\end{proof}

\begin{corollary}\label{j3}
 Let  $f: \sS \to \sS^{\prime}$ be a morphism in $\ssieves\fld$. 
 Let $\mot_{\sS}$ (resp., $\mot_{\sS^{\prime}}$) be a limit simplicial (partial) $\sS$-motivic site (resp., $\sS^{\prime}$-motivic site). 
 Then, $f_*$ and $f^*$ are morphisms of motivic sites between $\mot_{\sS}$ and $\mot_{\sS^{\prime}}$. 
 Thus, we have induced ring homomorphisms
 \begin{equation}
  \begin{split}
   f_{*} : \grot{\mot_{\sS}} &\to \grot{\mot_{\sS^{\prime}}} \\
   f^{*} : \grot{\mot_{\sS^{\prime}}} &\to \grot{\mot_{\sS}} \ .
  \end{split}
 \end{equation}
\end{corollary}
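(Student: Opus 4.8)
The plan is to verify that both $f_{*}$ and $f^{*}$ are morphisms of (partial) motivic sites in the sense of the definition preceding Theorem \ref{j2} — that is, each carries the unit object to the unit object and $\emptyset$ to $\emptyset$, each preserves products, and each restricts on every slice $\mot|_{X}$ to a morphism of lattices landing in the slice over the Zariski closure of the image — and then to quote Theorem \ref{j2}, which promotes any such morphism to the claimed ring homomorphism on Grothendieck rings. By Proposition \ref{j1} we already know that $f_{*}$ is left adjoint to $f^{*}$, and throughout I would exploit that all the structural operations on limit simplicial sieves — $\cup$, $\cap$, $\times$, $\times_{\sS}$, and intersection with an admissible open $U^{\circ}$ — are computed pointwise on the underlying functors $\fatpoints\fld \to \sset$ (Equations \ref{firstop} and \ref{secondop}, Constructions \ref{admopen} and \ref{goob}).

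I would dispose of $f^{*}$ first, as it is the base-change functor $\sX{\star} \mapsto \sX{\star}\times_{\sS^{\prime}}\sarc_{\fat}\sS$ and behaves formally. Applied to the unit it returns $\sarc_{\fat}\sS$, the unit of $\mot_{\sS}$; applied to $\emptyset$ it returns $\emptyset$, since $\emptyset$ is absorbing for pointwise fibre products; and it preserves products by the usual cancellation identity $(\sX{\star}\times_{\sS^{\prime}}\sY{\star})\times_{\sS^{\prime}}\sarc_{\fat}\sS \cong (\sX{\star}\times_{\sS^{\prime}}\sarc_{\fat}\sS)\times_{\sarc_{\fat}\sS}(\sY{\star}\times_{\sS^{\prime}}\sarc_{\fat}\sS) = f^{*}\sX{\star}\times_{\sS}f^{*}\sY{\star}$, which holds because everything in sight is computed pointwise and the identity is valid in $\set$. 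For the slice condition, base change commutes with the pointwise operations $\cup$ and $\cap$ and carries the admissible open $\sX{\star}\cap U^{\circ}$ to $f^{*}\sX{\star}\cap (f^{-1}U)^{\circ}$, again admissible; hence $f^{*}$ is $\sS^{\prime}$-continuous and each $f^{*}|_{X}\colon \mot_{\sS^{\prime}}|_{X} \to \mot_{\sS}|_{f^{*}X}$ is a morphism of lattices preserving bottom and top. Thus $f^{*}$ is a morphism of motivic sites and Theorem \ref{j2} gives $f^{*}\colon \grot{\mot_{\sS^{\prime}}}\to \grot{\mot_{\sS}}$.

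For $f_{*}$ the underlying limit simplicial sieve is unchanged — only the structure morphism $\sX{\star}\to \sS_{\star}$ is post-composed with $f_{\star}\colon \sS_{\star}\to (\sS^{\prime})_{\star}$ — so $f_{*}$ acts as the identity on underlying functors. It therefore commutes strictly with $\cup$, $\cap$ and intersection with admissible opens, sends $\emptyset$ to $\emptyset$, is continuous, and meets the lattice condition on each slice with no work. The two conditions that genuinely require argument are compatibility with the unit and with the product: one must identify the image of $\class{\sS}$ with $\class{\sS^{\prime}}$ (up to passing to the Zariski closure, as the definition allows) and show that post-composition with $f_{\star}$ sends $\times_{\sS}$ to $\times_{\sS^{\prime}}$, for instance by analysing the canonical comparison $\sX{\star}\times_{\sS}\sY{\star}\to\sX{\star}\times_{\sS^{\prime}}\sY{\star}$ induced by the diagonal of $f$. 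Here I would also bring in the adjunction of Proposition \ref{j1} and the monoidality of $f^{*}$ established above. Granting these points, Theorem \ref{j2} (or a direct repetition of its proof) produces $f_{*}\colon \grot{\mot_{\sS}}\to \grot{\mot_{\sS^{\prime}}}$.

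The step I expect to be the real obstacle is precisely this last verification for $f_{*}$, namely that post-composition with $f_{\star}$ respects the multiplicative structure of the Grothendieck rings. Unlike base change, the pushforward is not visibly product-preserving, because the relative product $\times_{\sS}$ is strictly finer than $\times_{\sS^{\prime}}$ — a fibre product over $\sS$ can fragment once the base is enlarged along $f$ — so the argument cannot reduce to a bare pointwise identity and must pass through the adjunction, or equivalently through a careful comparison of the units $\class{\sS}$ and $\class{\sS^{\prime}}$ in the two Grothendieck rings and of how the comparison map above behaves modulo the scissor relations. Everything else is a routine pointwise check of the kind already carried out repeatedly above, so the proof as a whole should follow the same terse pattern as the proof of Theorem \ref{j2} once this one compatibility is pinned down.
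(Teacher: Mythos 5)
The paper offers no real argument here: Theorem \ref{j2}'s proof is the single word ``immediate,'' and Corollary \ref{j3} is stated with no proof at all, so you cannot compare your reasoning against a written one. What you can compare against are the definitions, and your careful reading has caught something the paper glosses over. Your treatment of $f^{*}$ is sound --- it is a right adjoint (Proposition \ref{j1}), hence preserves the slice products $\times_{\sS}$ formally, sends $\sarc_{\fat}\sS'$ to $\sarc_{\fat}\sS$, sends $\emptyset$ to $\emptyset$, and commutes with the pointwise $\cup$, $\cap$, and $-\cap U^{\circ}$, so the $f^{*}$ half of the corollary does follow from Theorem \ref{j2} by a routine check.

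The difficulty you flag for $f_{*}$ is genuine and, as far as I can tell, is not answered anywhere in the paper. The multiplication on $\grot{\mot_{\sS}}$ is explicitly declared to be $\class{\sX{\star}}\cdot\class{\sY{\star}} := \class{\sX{\star}\times_{\sS}\sY{\star}}$, so ``preserves products'' in the definition of a morphism of motivic sites must mean preservation of the slice fibre product for Theorem \ref{j2} to yield a \emph{ring} homomorphism. But $f_{*}$ is the forgetful functor $[\sX{\star}\to\sS_{\star}]\mapsto[\sX{\star}\to(\sS')_{\star}]$: it leaves the underlying limit simplicial sieve unchanged and only recomposes the structure map with $f_{\star}$. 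Thus $f_{*}(\sX{\star}\times_{\sS}\sY{\star})$ is literally $\sX{\star}\times_{\sS}\sY{\star}$ regarded over $\sS'$, whereas $f_{*}(\sX{\star})\times_{\sS'}f_{*}(\sY{\star})$ is $\sX{\star}\times_{\sS'}\sY{\star}$, and these are different sieves whenever $f$ is not a monomorphism --- exactly the ``fragmentation'' you describe, and the standard categorical fact that the forgetful functor from a slice category (a left adjoint) does not preserve products. Your proposal correctly identifies the obstruction but does not resolve it, and I do not see how to resolve it with the paper's definitions as written: either ``preserves products'' should be weakened so that $f_{*}$ only induces a group homomorphism (or a $\grot{\mot_{\sS'}}$-module map after restriction of scalars along $f^{*}$, which is the standard statement in relative motivic integration), or the claim in Corollary \ref{j3} should be restricted to $f^{*}$. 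Your instinct to isolate this as the real obstacle is exactly right; what remains missing is the recognition that it is not merely an obstacle but a counterexample to the corollary as literally stated.
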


\subsection{Alternative construction}

We present here a somewhat finer concept of relative motivic sites. As of yet, we have defined a motivic site relative to some simplicial sieve 
$\sS{}$. By definition, all limit simplicial sieves with base $\sS{}$ are naturally contained in $\sarc_{\fat}\sS{}$ for some limit point $\fat$. Thus, 
every element $\sX{\star}$ of $\mot_{\sS{}}$ is contained in $\sarc_{\fat}\sS{} \times \mathbb{A}_{\fld}^{n}$ for some $n$ and comes equipped 
with a structure morphism $j : \sX{\star}\to \sarc_{\fat}\sS{}$ satisfying the properties of a limit simplicial sieve -- i.e., it is a projective limit of 
morphisms
$\sX{\maxim} \into \sarc_{\maxim}\sS{}$ where $\maxim$ run over $\bX$. Given a limit simplicial sieve $\sS_{\star}$ with base $\sS{}$, we may 
form the subset $\mot_{\sS_{\star}}$ of $\mot_{\sS{}}$ formed from all elements $\sX{\star}$ which are naturally contained in 
$\sS_{\star} \times \mathbb{A}_{\fld}^{n}$. This subset is indeed a partial motivic site if we define multiplication by $\sX{\star}\times_{\sS{\star}}\mathcal{Y}_{\star}.$  Note that $\sS_{\star}$  is the multiplicative unit in $\mot_{\sS_{\star}}$. The results of this 
section carry over to these types of motivic sites.  In particular, Proposition \ref{j1}, Theorem \ref{j2}, and Corollary \ref{j3} have corresponding analogous results.

\section{Action of $\tau_0$}
Let $\sS \in \ssieves\fld$. 
There is a functor $\tau_n : \ssieves \sS \to \sieves \sS$ defined by sending a
simplicial $\sS$-sieve $\sX{\bullet}$ to the $\sS$-sieve 
$\sX{n}$ and a morphism of simplicial $\sS$-sieves $f_{\bullet} : \sX{\bullet} \to
\sY{\bullet}$ to the morphism of $\sS$-sieves 
$f_n : \sX{n} \to \sY{n}$. 

\begin{lemma}\label{hack}
The functor $\tau_n :\ssieves \sS \to \sieves\sS $ is the 
left adjoint to the trivial functor $(-)_{\bullet} : \sieves \sS  \to \ssieves\sS$
\end{lemma}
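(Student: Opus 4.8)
The plan is to exhibit the adjunction bijection explicitly and check naturality. First I would unwind what the two functors do: $\tau_n$ sends a simplicial $\sS$-sieve $\sX\bullet$ to its $n$-th level $\sX n$, viewed as an object of $\sieves\sS$, while $(-)_\bullet$ sends an $\sS$-sieve $T$ to the constant simplicial object $[m]\mapsto T$ with all face and degeneracy maps the identity. So I must produce, for every $\sX\bullet \in \ssieves\sS$ and every $T \in \sieves\sS$, a natural bijection
\begin{equation}
\mor{\sieves\sS}{\tau_n(\sX\bullet)}{T} \;\cong\; \mor{\ssieves\sS}{\sX\bullet}{T_\bullet}\ .
\end{equation}

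The key step is the construction of the two maps and the verification that they are mutually inverse. In one direction, given a morphism of $\sS$-sieves $g : \sX n \to T$, I define a morphism of simplicial $\sS$-sieves $\hat g : \sX\bullet \to T_\bullet$ by setting, for each $[m]\in\Delta$, the component $\hat g_m : \sX m \to T$ to be the composite $\sX m \map{\sX(\alpha)} \sX n \map{g} T$, where $\alpha : [n]\to[m]$ is ... here is the one delicate point: I need a canonical morphism $[n]\to[m]$ in $\Delta$, or rather I should use that $T_\bullet$ is constant, so that a simplicial map into it is the same as a compatible family of maps $\sX m \to T$ that is constant on the structure maps of $\sX\bullet$; equivalently, by the colimit/coequalizer presentation of the constant functor, a map $\sX\bullet \to T_\bullet$ is determined by a single map out of the appropriate level together with a commutativity condition. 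The cleanest route is to observe that $(-)_\bullet$, being the constant-diagram functor $\sieves\sS \to \op{Fun}(\Delta^o,\sieves\sS)$, has as its genuine left adjoint the colimit functor; but $\tau_n$ is not the colimit, so what is actually being asserted is the weaker statement that precomposition with the (unique up to the simplicial identities) structure maps identifies $\mor{}{\sX\bullet}{T_\bullet}$ with $\mor{}{\sX n}{T}$. I would therefore check directly: a natural transformation $\eta : \sX\bullet \to T_\bullet$ consists of maps $\eta_m : \sX m \to T$ with $\eta_m \after \sX(t) = \eta_{m'}$ for every $t : [m']\to[m]$ in $\Delta$; since every $[m]$ receives a map from $[n]$ when ... (e.g. when $n=0$, or in general using that $\Delta$ has the property that the $\eta_m$ are all determined by $\eta_n$ via the degeneracies and faces), the whole family $\eta$ is recovered from $\eta_n$, giving the inverse map $\eta \mapsto \eta_n$.

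In the reverse direction I send $\eta : \sX\bullet \to T_\bullet$ to $\eta_n : \sX n = \tau_n(\sX\bullet) \to T$. That $\eta \mapsto \eta_n \mapsto \widehat{\eta_n} $ returns $\eta$ follows from the compatibility conditions just described, and that $g \mapsto \hat g \mapsto \hat g_n$ returns $g$ is immediate since $\hat g_n = g \after \sX(\op{id}_{[n]}) = g$. Finally I would check naturality in both variables: naturality in $T$ is transparent because postcomposition with a map $T\to T'$ commutes with taking the $n$-th component; naturality in $\sX\bullet$ is the statement that for a simplicial map $\varphi : \sX\bullet \to \sX\bullet'$ one has $(\eta \after \varphi)_n = \eta_n \after \varphi_n$, again immediate.

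The main obstacle is purely bookkeeping about $\Delta$: I expect the one genuinely load-bearing observation to be that a simplicial map into a \emph{constant} simplicial object is completely determined by — and can be freely prescribed at — a single chosen level $[n]$, because the constancy of $T_\bullet$ forces $\eta_m$ to factor through $\sX$ of \emph{any} morphism $[n]\to[m]$, and such morphisms exist for all $m$ (and the resulting value is independent of the choice by the simplicial identities together with constancy). Once that is nailed down, everything else is formal. If $n$ is to be allowed arbitrary (not just $0$), I would note that $\Delta$ being directed under neither direction is not an obstruction here precisely because we are mapping into a constant object, not out of one; this is the subtlety worth spelling out carefully in the written proof.
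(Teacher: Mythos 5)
You put your finger on the real issue and then tried to argue past it, but there is no way past it: the statement as written is false. As you note, $(-)_\bullet$ is the constant-diagram functor $\sieves\sS \to \op{Fun}(\Delta^o,\sieves\sS)$, and its genuine left adjoint is the colimit $\op{colim}_{\Delta^o} = \pi_0(-) = \op{coeq}(d_0,d_1 : \sX 1 \rightrightarrows \sX 0)$, which is not $\tau_n$. The precise failure in your attempted rescue is the well-definedness of $\hat g$: given $g : \sX n \to T$, the candidate $\hat g_m := g\circ \sX(\alpha)$ genuinely depends on the choice of $\alpha : [n]\to[m]$ in $\Delta$. Already for $n=0$, $m=1$ the two cofaces $[0]\to[1]$ give $g\circ d_0$ and $g\circ d_1$, and nothing forces these to agree; the simplicial identities relate composites of cofaces and codegeneracies, not parallel pairs of cofaces, so the parenthetical ``unique up to the simplicial identities'' is exactly where the argument breaks. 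A map $\sX\bullet \to T_\bullet$ is \emph{determined} by its level-$n$ component, but it cannot be \emph{freely prescribed} there.

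Your route is also not the one the paper takes. The paper's proof actually sets up the opposite hom-set bijection, $\mor{\ssieves\sS}{(\sY{})_\bullet}{\sX\bullet} \cong \mor{\sieves\sS}{\sY{}}{\tau_n(\sX\bullet)}$, i.e.\ it is really arguing that $(-)_\bullet \dashv \tau_n$, so that $\tau_n$ would be the \emph{right} adjoint of $(-)_\bullet$. That claim is true, but only for $n=0$: since $[0]$ is terminal in $\Delta$ it is initial in $\Delta^o$, there is a unique $\Delta^o$-morphism $[0]\to[m]$ for every $m$, and so a natural transformation out of a constant object is freely determined by its $0$-component. For $n>0$ the same obstruction reappears (now the $n+1$ parallel $\Delta^o$-morphisms $[n]\to[0]$ impose nontrivial constraints on a level-$n$ map before it can extend), and the paper's proof also contains a slip -- the claimed inverse sends $s:\sY{}\to\sX n$ to a morphism into $(\sX n)_\bullet$ rather than into $\sX\bullet$, so it does not even land in the right hom-set. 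As a consequence, Corollary \ref{jack} (all the $\tau_n$ are naturally isomorphic, by uniqueness of adjoints) is also false; it would force $\sX 0 \cong \sX 1$ naturally for every simplicial sieve. The statement needs repair -- ``left'' should be ``right'' and $n$ restricted to $0$ -- before any proof, yours or the paper's, can go through.
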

\begin{proof}
 Let $\sX{\bullet} \in \ssieves \sS$ and let $\sY{} \in \sieves\sS$. Consider a morphism 
 simplical $\sS$-sieves $t_{\bullet}: (\sY{})_{\bullet} \to \sX{\bullet}$. We send it to the morphism of $\sS$-sieves defined by 
 $\tau_{n}(t_{\bullet}) = t_n : (\sY{})_{n} \to \sX n$. Note that $(\sY{})_n = \sY{}$. Thus, we have a surjective map of sets
 \begin{equation}
  \mor{\ssieves\sS}{(\sY{})_{\bullet}}{\sX\bullet} \to \mor{\sieves\sS}{\sY{}}{\tau_n(\sX{\bullet})} \ .
 \end{equation}
Likewise, we define an inverse to this map by sending a morphism of $\sS$-sieves $s : \sY{} \to \sX n$ to the morphism of simplicial
$\sS$-sieves defined by $(s)_{n} : (\sY{})_{\bullet} \to (\sX n)_{\bullet}$. It is immediate that this is the inverse. 
The fact that these bijections are natural is a simple exercise.
\end{proof}

\begin{corollary}\label{jack}
 The functors $\tau_n : \ssieves \sS \to \sieves \sS$ are naturally isomorphic. 
\end{corollary}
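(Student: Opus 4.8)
The plan is to deduce the statement at once from Lemma \ref{hack} by invoking the essential uniqueness of adjoint functors. By Lemma \ref{hack}, for every $n \in \nat$ the functor $\tau_n : \ssieves\sS \to \sieves\sS$ is a left adjoint of the one fixed functor $(-)_\bullet : \sieves\sS \to \ssieves\sS$. Since $\sieves\sS$ and $\ssieves\sS$ are genuine categories (constructed just as $\ssieves\fld$ was, cf. the footnote to the definition of that category), the standard categorical fact that any two left adjoints of a given functor are canonically naturally isomorphic applies without change; taking these two left adjoints to be $\tau_m$ and $\tau_n$ for arbitrary $m, n \in \nat$ shows that $\tau_m$ and $\tau_n$ are naturally isomorphic, which is the assertion.

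For concreteness I would also write the isomorphism down. The hom-set bijection of Lemma \ref{hack} is the same datum as a unit $\eta^{(n)}$ from the identity functor of $\ssieves\sS$ to $(-)_\bullet \circ \tau_n$ together with a counit $\epsilon^{(n)}$ from $\tau_n \circ (-)_\bullet$ to the identity functor of $\sieves\sS$, subject to the two triangle identities. The isomorphism $\tau_m \cong \tau_n$ is then the composite natural transformation
\[
\tau_m \;\Longrightarrow\; \tau_m \circ (-)_\bullet \circ \tau_n \;\Longrightarrow\; \tau_n,
\]
where the first arrow is $\eta^{(n)}$ whiskered on the left by $\tau_m$ and the second is $\epsilon^{(m)}$ whiskered on the right by $\tau_n$; its two-sided inverse is the analogous composite with $m$ and $n$ interchanged, and the verification that the two composites are mutually inverse is precisely an application of the triangle identities for $\tau_m \dashv (-)_\bullet$ and $\tau_n \dashv (-)_\bullet$.

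There is no genuine obstacle here, the argument being purely formal; the only work is the bookkeeping of the whiskerings and triangle identities, which is routine. If one prefers, this can be avoided altogether: for each fixed simplicial $\sS$-sieve $\sX{\bullet}$, Lemma \ref{hack} says that $\tau_m(\sX{\bullet})$ and $\tau_n(\sX{\bullet})$ enjoy one and the same universal property in $\sieves\sS$, naturally in $\sX{\bullet}$, so a Yoneda argument yields the natural isomorphism between the functors $\tau_m$ and $\tau_n$ directly.
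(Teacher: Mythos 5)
Your argument is exactly the one the paper gives: by Lemma \ref{hack} every $\tau_n$ is a left adjoint to the fixed functor $(-)_\bullet$, and any two left adjoints of a given functor are naturally isomorphic. The paper states this in one line; you merely unpack the standard unit/counit or Yoneda verification, which is the same proof.
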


\begin{proof}
 This is by uniqueness of left adjoints. 
\end{proof}

\begin{remark}
 As a result of Corollary \ref{jack}, we may often work just with the functor $\tau_0$. When we restrict to a (partial) $\sS$-motivic site,
 sometimes this result will not hold. 
\end{remark}

\begin{proposition}
 The functor $\tau_n : \ssieves \sS \to \sieves \sS$ induces a surjective ring
homomorphism
 \begin{equation}
  \grot{\mot} \to \grot{\tau_n(\mot)}
 \end{equation}
where $\mot$ is a limit simplicial (partial) $\sS$-motivic site and where $\tau_n(\mot)$ is the limit (partial) $\tau_n(\sS)$-motivic
site defined by the image of $\mot$ under $\tau_n$. If $\mot$ is strictly schemic; then this is an isomorphism of rings with inverse induced from the trivial functor $(-)_{bullet}$ if both of these functors are full and faithful.
\end{proposition}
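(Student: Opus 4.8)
The plan is to read the statement as two assertions: that $\tau_n$ induces a surjective ring homomorphism $\grot{\mot}\to\grot{\tau_n(\mot)}$ in general, and that this homomorphism is an isomorphism with inverse induced by $(-)_{\bullet}$ once $\mot$ is strictly schemic and both functors are full and faithful. I would prove the first assertion by checking that $\tau_n$ is a morphism of (partial) motivic sites and applying Theorem \ref{j2}, and the second by comparing $\tau_n$ with the homomorphism $h_n$ of Theorem \ref{simreldis} and then using the adjunction of Lemma \ref{hack}.

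First I would verify the bookkeeping that makes $\tau_n$ a morphism of (partial) $\sS$-motivic sites onto its image $\tau_n(\mot)$: it sends the base $\sS$ (as a trivial simplicial $\sS$-sieve) to $\tau_n(\sS)$ and the empty simplicial sieve to the empty sieve, and it preserves fiber products and, on each slice $\mot|_X$, the lattice operations $\cup,\cap$. All of this is immediate because relative products, unions and intersections of (limit) simplicial sieves are formed levelwise, i.e. by Equations \ref{firstop} and \ref{secondop} and the pointwise description of $\times_{\sS}$: one has $\tau_n(\sX{\bullet}\times_{\sS}\sY{\bullet})=\sX n\times_{\tau_n(\sS)}\sY n$, and likewise $\tau_n(\sX{\bullet}\cup\sY{\bullet})=\sX n\cup\sY n$ and $\tau_n(\sX{\bullet}\cap\sY{\bullet})=\sX n\cap\sY n$. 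Hence $\tau_n$ restricts to a morphism of lattices $\mot|_X\to\tau_n(\mot)|_{\tau_n(X)}$, so by Theorem \ref{j2} it induces a ring homomorphism $\grot{\mot}\to\grot{\tau_n(\mot)}$. Surjectivity is then immediate: $\tau_n(\mot)$ is by construction the full subcategory whose objects are exactly the $\sX n$ for $\sX{\bullet}\in\mot$, so $\grot{\tau_n(\mot)}$ is generated by the classes $\class{\sX n}=\tau_n(\class{\sX{\bullet}})$; equivalently, $\tau_n((\sY{})_{\bullet})=\sY{}$ by Lemma \ref{hack}, so $(-)_{\bullet}$ furnishes a section on classes.

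For the strictly schemic case, recall $\mot$ is strictly schemic when $h_n$ is injective for some $n$, where $h_n$ is the homomorphism of Theorem \ref{simreldis} (equivalently Theorem \ref{simdis}) from $\grot{\mot}$ into the ring of functors $\Delta^o\to\sC$, with $\sC$ the one-object category whose endomorphism ring is $\grot{\tau_n(\mot)}$. By the defining formula $h_n(\class{\sX{\bullet}})=(t\mapsto\class{\sX n})$, the map $h_n$ factors as $\iota\circ\tau_n$, where $\tau_n$ is the homomorphism just constructed and $\iota$ sends $r\in\grot{\tau_n(\mot)}$ to the functor constant at $r$ on morphisms; and $\iota$ is visibly an injective ring homomorphism. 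Therefore $h_n$ is injective precisely when $\tau_n$ is, so for the $n$ witnessing strict schemicity the homomorphism $\tau_n\colon\grot{\mot}\to\grot{\tau_n(\mot)}$ is injective, hence by surjectivity a ring isomorphism. Finally, if in addition $\tau_n$ and $(-)_{\bullet}$ are both full and faithful, then since $\tau_n$ is left adjoint to $(-)_{\bullet}$ by Lemma \ref{hack} the adjunction is an adjoint equivalence, so (identifying $\sS$ with the trivial simplicial sieve on $\tau_n(\sS)$) these functors restrict to mutually quasi-inverse isomorphisms of the motivic sites $\mot$ and $\tau_n(\mot)$; by Theorem \ref{j2} the trivial functor induces a ring homomorphism $\grot{\tau_n(\mot)}\to\grot{\mot}$, and because $\tau_n\circ(-)_{\bullet}=\mathrm{id}$ on Grothendieck rings this induced map is precisely $\tau_n^{-1}$.

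I expect the only genuinely delicate step to be the identification used in the strictly schemic case: one must pin down the target of $h_n$ concretely enough to exhibit the factorization $h_n=\iota\circ\tau_n$ and to see $\iota$ is injective — that is, to make precise the phrase \emph{the simplicial category of the category corresponding to} $\grot{\tau_n(\mot)}$ from Theorem \ref{simreldis} — and to check that the full-and-faithful hypotheses really force the base $\sS$ to be (up to the relevant identification) the trivial simplicial sieve on $\tau_n(\sS)$, so that $(-)_{\bullet}$ lands back in $\mot$. The levelwise computations, the surjectivity, and the formal passage from an adjoint equivalence to inverse ring isomorphisms are all routine.
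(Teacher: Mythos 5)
The paper's own proof of this proposition is literally the single sentence ``This is immediate,'' so there is no argument in the source to compare against; your write-up is the only real argument on the table, and it supplies the details a careful reader would want. The route you take is the natural one and it is correct as far as the paper's framework allows: the levelwise formulas (Equations \ref{firstop} and \ref{secondop}, together with the pointwise fiber product) show that $\tau_n$ preserves $\emptyset$, $\sS$, products and the slice lattices, so Theorem \ref{j2} applies to give a ring homomorphism; surjectivity is immediate because $\tau_n(\mot)$ is defined as the image, so every generator $\class{\sY{}}$ of $\grot{\tau_n(\mot)}$ has the form $\tau_n(\class{\sX\bullet})$. Your observation that $h_n$ factors as $\iota\circ\tau_n$, with $\iota$ the embedding of $\grot{\tau_n(\mot)}$ as constant functions on morphisms, cleanly converts the paper's definition of ``strictly schemic'' (injectivity of $h_n$ for some $n$) into injectivity of $\tau_n$ on Grothendieck rings; combined with the already-established surjectivity this yields the ring isomorphism. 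The passage from the adjunction of Lemma \ref{hack} to an adjoint equivalence under the full-and-faithful hypothesis, and hence to $(-)_{\bullet}$ inducing the inverse, is also standard.

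The caveats you flag yourself are genuine but are problems inherited from the paper, not gaps in your argument: (i) ``strictly schemic'' and $h_n$ are introduced for a simplicial motivic site, whereas this proposition is stated for a limit simplicial $\sS$-motivic site, so one has to take the obvious extension of those definitions for granted; (ii) the target of $h_n$ in Theorem \ref{simdis}/\ref{simreldis} (``the simplicial category of the category corresponding to $\grot{\tau_n(\mot)}$'') is not literally a category of functors $\Delta^o\to\sC$, since the assignment $t\mapsto\class{\sX n}$ need not respect identities or composition; it should be read as the product ring indexed by the morphisms of $\Delta^o$, in which case your $\iota$ is indeed an injective ring homomorphism and the factorization $h_n=\iota\circ\tau_n$ holds on the nose; (iii) strict schemicity only guarantees injectivity of $h_n$ (hence of $\tau_n$) for the witnessing $n$, so strictly speaking the isomorphism is established only for that index, and one should either restrict the claim accordingly or invoke an additional argument to propagate it to all $n$; and (iv) the hypothesis that $(-)_{\bullet}$ is full and faithful between $\mot$ and $\tau_n(\mot)$ implicitly requires $(-)_{\bullet}$ to land back in $\mot$, which is not automatic and is exactly what makes the restricted adjunction an adjoint equivalence. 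None of these affects the correctness of your argument under the intended reading; they are exactly the points the paper's ``immediate'' glosses over.
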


\begin{proof}
This is immediate.
\end{proof}

\begin{corollary}
 Let $\mot$ be a limit (partial) $\sS_{0}$-motivic site for some sieve $\sS_{0}$. Then $(\mot)_{\bullet}$ is a limit simplicial (partial) $(\sS_0)_{\bullet}$-motivic site.
 Futhermore, there is an isomorphism
 \begin{equation}
  \grot{(\mot)_{\bullet}} \to \grot{\mot} 
 \end{equation}
Note that in this case $\tau_n$ are all naturally isomorphic.
\end{corollary}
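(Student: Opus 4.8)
The plan is to deduce both assertions from the adjunction $\tau_n\dashv(-)_\bullet$ of Lemma~\ref{hack} together with the Proposition immediately preceding this corollary, the operative facts being that $(-)_\bullet$ is full and faithful and that $\tau_n\circ(-)_\bullet$ is the identity functor. First I would verify that $(\mot)_\bullet$ is a limit simplicial (partial) $(\sS_0)_\bullet$-motivic site. By Remark~\ref{rk2} the trivial functor extends to limit sieves, so $(\mot)_\bullet$ is a subcategory of $\limssieves{(\sS_0)_\bullet}$ whose objects are the $(\sX{\star})_\bullet$ with $\sX{\star}\in\mot$. Closure under products follows from Equation~\ref{firstop} (equivalently Proposition~\ref{taka}) and closure of $\mot$ under products, since $(\sX{\star})_\bullet\times_{(\sS_0)_\bullet}(\sY{\star})_\bullet=(\sX{\star}\times_{\sS_0}\sY{\star})_\bullet$. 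For the lattice condition, given an ambient limit simplicial scheme $X$, all objects of $(\mot)_\bullet|_X$ have the common base $\tau_0(X)$, so $(\mot)_\bullet|_X$ is the image of $\mot|_{\tau_0(X)}$ under $(-)_\bullet$; Equation~\ref{secondop} shows $(-)_\bullet$ commutes with $\cup$ and $\cap$ and fixes $\emptyset$ and the top element, so $(\mot)_\bullet|_X$ is a (complete) lattice isomorphic to $\mot|_{\tau_0(X)}$.

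For the ring isomorphism I would apply the preceding Proposition to the limit simplicial $(\sS_0)_\bullet$-motivic site $(\mot)_\bullet$, noting that $\tau_n((\sS_0)_\bullet)=\sS_0$. By Lemma~\ref{hack}, $\tau_n\circ(-)_\bullet$ is the identity on objects and morphisms (since $(\sY{})_n=\sY{}$), hence $\tau_n((\mot)_\bullet)=\mot$. Moreover $(\mot)_\bullet$ is strictly schemic: if $(\sX{\star})_\bullet$ is not $(\mot)_\bullet$-homeomorphic to the empty simplicial sieve, then $\sX{\star}=\tau_n((\sX{\star})_\bullet)$ is nonempty for every $n$, so every $h_n$ is injective --- this is the argument of the remark following Theorem~\ref{simdis}. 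Finally, a morphism $(\sX{})_\bullet\to(\sY{})_\bullet$ of (limit) simplicial sieves is the same datum as a morphism $\sX{}\to\sY{}$, so $(-)_\bullet$ restricted to $\mot$ and $\tau_n$ restricted to $(\mot)_\bullet$ are mutually inverse equivalences; in particular both are full and faithful, so the hypotheses of the preceding Proposition hold and the induced surjection $\grot{(\mot)_\bullet}\to\grot{\mot}$ is an isomorphism with inverse induced by $(-)_\bullet$. The closing assertion that the $\tau_n$ are naturally isomorphic is immediate from Corollary~\ref{jack}, and the restriction to $(\mot)_\bullet$ causes no trouble here --- contrary to the caveat in the remark after Corollary~\ref{jack} --- because on every object $(\sX{\star})_\bullet$ all the $\tau_n$ return $\sX{\star}$ canonically.

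The step I expect to be the main obstacle is the bookkeeping around ambient spaces in the first assertion: to know that $(\mot)_\bullet|_X$ is genuinely carried onto $\mot|_{\tau_0(X)}$ one must check that admissible opens of, and $(\mot)_\bullet$-homeomorphisms between, objects $(\sX{\star})_\bullet$ correspond bijectively via $\tau_0$ to admissible opens and $\mot$-homeomorphisms of $\sX{\star}$ --- i.e. that continuity and the $\cup,\cap$-structure are faithfully reflected by $\tau_0$ on the essential image of $(-)_\bullet$ --- and that the ambient-space notion in $\categ{LimsSch}_{(\sS_0)_\bullet}$, which by the Example properly contains $\limssch{(\sS_0)_\bullet}$, contributes nothing outside that image after intersecting with $(\mot)_\bullet$. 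Checking the full-and-faithfulness hypothesis of the cited Proposition honestly, rather than invoking it loosely, is the one place where the earlier warning about restricting to a motivic site has to be confronted; the resolution is that $(\mot)_\bullet$ is by construction the essential image of the fully faithful functor $(-)_\bullet$.
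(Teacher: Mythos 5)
Your argument is correct and takes the route the paper evidently intends, since the paper supplies no proof of the corollary and treats it as an instance of the preceding proposition via Lemma~\ref{hack}. The key observations you make --- that $\tau_n\circ(-)_\bullet = \mathrm{id}$, that $(-)_\bullet$ restricted to $\mot$ and $\tau_n$ restricted to $(\mot)_\bullet$ are mutually inverse (since every object of $(\mot)_\bullet$ is a constant simplicial sieve, so $(-)_\bullet\circ\tau_n$ is also the identity there), that this gives fullness and faithfulness, and that strict schemicity holds because $((\sX{\star})_\bullet)_n = \sX{\star}$ for all $n$ --- are exactly the hypotheses the preceding proposition requires, and your reading of the remark after Theorem~\ref{simdis} as handing you strict schemicity for free in the case $\mot=(\mot')_\bullet$ is sound. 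Your flagged concern about ambient spaces is genuine in principle but does not create a gap: given $(\sX{\star})_\bullet,(\sY{\star})_\bullet$ with a common ambient limit simplicial scheme $X$, the union and intersection are computed pointwise via Equation~\ref{secondop}, and since both objects are constant, $(\sX{\star})_\bullet\cup(\sY{\star})_\bullet=(\sX{\star}\cup\sY{\star})_\bullet$ regardless of whether $X$ itself is constant, so the lattice structure on $(\mot)_\bullet|_X$ is inherited from $\mot$ as you claim.
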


\begin{remark}
 The obstruction to $\grot{\mot} \to \grot{\tau_n(\mot)}$ being an isomorphism is that in general every morphism of simplicial sieves
 will not arrise from applying $(-)_{\bullet}$ to a morphism of sieves and that there can be more than
 one morphism of simplicial sieves which are sent to the same morphism of sieves under $\tau_n$. 
\end{remark}

Let $\sS$ be a simplicial scheme. We say that an element of $\sieves {\tau_0(\sS)}$ is {\it measurable} if it is an element of 
$\tau_0(\smes{\sS})$. As $\tau_0(\ssieves{\sS}) = \sieves{\tau_0(\sS)}$, this is enough to define the notion for all relative limit sieves.

 Let $\sS$ be a simplicial scheme. Let $\mot_{\sS}$ be a limit simplicial (partial) $\sS$-motivic site. For each $n$,
 let $\mot_{\sS_n} = \tau_n(\mot_{\sS})$. Let $\categ{G}_{\bullet}$ be the simplicial category defined by $[n] \mapsto \mot_{\sS_n}$ with
 face maps $(d_n)_{*}$ and degeneracy maps $(s_n)_{*}$. Also, let $\categ{H}_{\bullet}$ be the cosimplicial category defined by $[n] \mapsto \mot_{\sS_n}$ with
 face maps $(d_n)^{*}$ and degeneracy maps $(s_n)^{*}$. Then, by the simplicial yoneda embedding, the category formed by the collection of functors  from $\Delta^{\circ}$ to $\sieves {\sS}$ defined by  $\mot_{\sS}$ is naturally equivalent to 
 $\categ{G}_{\bullet}$ and naturally anti-equivalent to $\categ{H}_{\bullet}$.
Thus, there are two possible ways to endow $\mot_{\sS}$ with the structure of a simplicial category. 

Likewise, let $G_{\bullet}$ be the simplicial ring defined by $[n] \mapsto \grot{\mot_{\sS_n}}$ with
 face maps $(d_n)_{*}$ and degeneracy maps $(s_n)_{*}$. Also, let $H_{\bullet}$ be the simplicial ring defined by $[n] \mapsto \grot{\mot_{\sS_n}}$ with
 face maps $(d_n)^{*}$ and degeneracy maps $(s_n)^{*}$. We call $G_{\bullet}$ the {\it  simplicial grothendieck ring} of $\mot_{\sS}$ and $H_{\bullet}$ the {\it cosimplicial grothendieck ring} of $\mot_{\sS}$.

\begin{remark}
 We note that all of these results generalize to the case of $\infty$-simplicial sets -- that is, functors from 
 $\prod_{i=1}^{\infty} \Delta^o \to \categ{Sets}$. 
\end{remark}

\section{Specialization to schemic motivic measures}

Let $\sS$ be a simplicial sieve. We have already noted that $\ssieves{\sS} \subset \smes{\sS}.$ Moreover, 
for any fat point $\maxim$, we have
the function 
\begin{equation}
(\sX,\maxim) \mapsto \mu_{\maxim, \{\maxim\}, 0}^{\sim}(\sX{}) = [\sarc_{\maxim}\sX{}]
\end{equation}
which after post-composing with $\tau_0$ yields the motivic measure defined in \cite{schmot2}. Also, 
the function which sends a triple $(\sX{}, X, \maxim)$ to 
\begin{equation}
(\sX{}, X, \maxim) \mapsto \mu_{\spec \fld, \{\spec\fld\},0}^{\sim_1}(\sX{}) \cdot \mu_{\maxim, \{\maxim\}, 1}^{\sim_2}(X)
\end{equation}
is the function denoted by $\sX{} \mapsto \int \sX{} d_{\maxim}X$ in loc. cit. after post-composing with $\tau_0$. 
This covers all finite motivic measures studied in loc. cit.

In \cite{me}, the notion of a $\fat$-stable scheme $X$ where $\fat$ is an infinite limit point with point system $\bX$ was defined and studied. 
It is clear that $X$ is measurable and that $\tau_0(\mu_{\fat, \bX, 1}(\sX{A}))$ is the motivic 
measure denoted in \cite{me} by $\mu_{\fat}(A)$ where
$A$ is $\fat$-stable subset of $\arc{\fat} X$ where $\sX{A}$ is the limit sieve defined by $\sX{\maxim} = \pi_{\maxim}^{\fat}((A)_{\bullet})$
 for each $\maxim \in \bX$ where $\pi_{\maxim}^{\fat}$ is the natural map from $\sarc_{\fat} X $ to $\sarc_{\maxim}X$.
 Following my work in loc. cit., we define the notion of a laxly $\sS$-measurable limit simplicial sieve as follows.
 
 \begin{definition} \label{defrelmes}
Let $\sS$ be a simplicial sieve.
We say that a limit simplicial $\sS$-sieve $\sX{\star}$ is {\it laxly} $\sS$-{\it measurable} if there is a non-negative real number $Q$ such that
\begin{equation}\label{defrelmes}
\mu_{\fat, \bX, Q, l}^{\sim}(\sX{\star}) :=  \ulim {\class{\sX \maxim}\lef^{-\lceil Q\cdot\dim {\sarc_{\maxim}{\sX \ }}\rceil - l(\maxim)}}
\end{equation}
is an element of the image of the diagonal ring homomorphism
\begin{equation}
\grot{\ssieves {\sS}}_{\lef_{\bullet}} \to \prod_{\sim} \grot{\ssieves {\sS}}_{\lef_{\bullet}}
\end{equation}
where $\sim$ is the
ultrafilter on $\bX$ which defines the term on the right-hand side of Equation \ref{defrelmes} as an ultra-limit and $\sX{}$ is the base
of $\sX{\star}$ and $l$ is some set map from $\bX$ to $\nat$. Here
$\lef^{-\lceil Q\cdot\dim {\sarc_{\maxim}{\sX \ }}\rceil}$ is defined to be the multiplicative inverse of the equivalence class of the simplicial scheme defined by
$[n]$ to $\mathbb{A}_{S}^{-\lceil Q\cdot\dim {\narc_{\maxim}{\sX \ }}\rceil}$. 
We  denote the full subcategory of $\limssieves \sS$ whose objects are all laxly measurable limit simplicial $\sS$-sieves as 
$\categ{slax}_{\sS}$.
If we choose to fix the function $l$, we can form the subcategory $\smes{\sS}^{l}$ of all objects 
of $\categ{slax}_{\sS}.$ Thus, $\categ{slax}_{\sS} = \cup_{l} \smes{\sS}^{l}$. 
\end{definition}

\begin{remark}
 It is straightfoward to show that $\categ{slax}_{\sS}$ is a limit simplicial partial $\sS$-motivic sites
 for any simplicial sieve $\sS$. 
\end{remark}

 Again, following \cite{me}, if $X$ is a $\fat$-laxly stable scheme, then
 $\tau_0(\mu_{\fat, \bX, 1, l}^{\sim}(\sX{A}))$ is the motivic 
measure denoted in \cite{me} by $\mu_{\fat}^{l}(A)$ where
$A$ is $\fat$-laxly stable subset of $\arc{\fat} X$ where $\sX{A}$ is the limit sieve defined by $\sX{\maxim} = \pi_{\maxim}^{\fat}((A)_{\bullet})$.
This exhaust the infinite schemic motivic measures studied in loc. cit. 

\subsection{Indexed families}

Let $\categ{W}$ be a category and consider the simplicial category $\categ{sW}$ of $\categ{W}$ formed by all covariant functors from $\Delta^{\circ}\to \categ{W}$. Forgetting the face and degeneracy maps in $\categ{sW}$ gives us the category of all diagrams from a natural numbers to $\categ{W}$. In other words, it is the category formed by families of objects $w_i$ of $\categ{W}$ indexed by $i \in \nat$. We denote this category by $\categ{iW}$. Applying this forgetful functor to the simplicial categories within this paper yields a theory of {\it measurable indexed motivic sites relative to an indexed family of limit sieves}. Most of the results in this paper do not utilize the face and degeneracy maps. Therefore, by abstract non-sense,  the definitions and results carry over to this context without issue. 

\section{Toplogical realization}

To each object $A$ of $\sset$, we have the associated topolgical space 
\begin{equation}
|A| = \lim_{\Delta^n \to A}|\Delta^n|
\end{equation}
where $|\Delta^n|$ is the topological $n$-simplex. Moreover, this defines a functor $|-|$ from $\sset$ to $\categ{CGHaus}$
where $\categ{CGHaus}$ is the category of compactly generated hausdorff topological spaces.
We may extend this to a functor $\ssieves{\fld}$ to $\mbox{Func}(\fatpoints\fld,\categ{GCHaus})$ by defining 
$|\sX{}|(\maxim)= |\sX{}(\maxim)|$. 
This functor preserves unions, intersections products -- i.e. we have
\begin{equation}
\begin{split}
|\sX{}\cup\sY{}|(\maxim) &= |\sX{}|(\maxim)\cup|\sY{}|(\maxim) \\
|\sX{}\cap\sY{}|(\maxim) &= |\sX{}|(\maxim)\cap|\sY{}|(\maxim) \\
|\sX{}\times\sY{}|(\maxim) &= |\sX{}|(\maxim)\times|\sY{}|(\maxim) \ .
\end{split}
\end{equation}
Thus, given a motivic site $\mot$, we can form the associated {\it topological motivic site}, denoted by $|\mot|$, as the small subcategory of $\categ{CGHaus}$ whose objects are $|\sX{}|$ for $\sX{}\in \ssieves\fld$ whose morphisms are all morphisms in $\categ{CGHaus}$ of the form $|f| :|\sX{}| \to |\sY{}|$ where $f : \sX{}\to\sY{}$ is a morphism in $\ssieves\fld$. Note that $|-|$ sends $\mot$-homeomorphisms to (topological) homeomorphisms which are in $|\mot|$.
Thus, we can form the grothendieck ring of the topological motivic site $\grot{|\mot|}$ in the analogous way to the simplicial sieve case.

Of related interest, we have notion of homotopic maps in $|\mot|$: we say that two maps $f$ and $g$ are $\mot$-homotopic if there exists
a homotopy $H : |\sX{}| \times [0,1] \to |\sY{}|$ such that $H(-,t)$ is a morphism in $|\mot|$ for all $t$. Then, if we denote the equivalence class of $\sX{}\in \mot$ up to homotopy by $\langle \sX{} \rangle^h$, we can form the free abelian group generated by symbols $\langle \sX{} \rangle^h$ with $\sX{}\in \mot$. Here the equivalence relation is $\sX{} \sim \sY{}$ if and only if  $|\sX{}|$ is $\mot$-homotopic to  $|\sY{}|$. Moding out by scissor relations we form the homotopy grothendieck ring $\hgrot{\mot}$  of  $\mot$. We form the cooresponding topological homotopy grothendieck ring $\hgrot{|\mot|}$ in the analogous way.
Furthermore, we say that a limit $\fld$-sieve $\sX{\star}$ is measurable up to $\mot$-homotopy if 
if there is a non-negative real number $Q$ such that
\begin{equation}\label{defmes}
\mu_{\fat, \bX, Q}^{\sim}(\sX{\star}) :=  \ulim {\class{\sX \maxim}\lef^{-\lceil Q\cdot\dim {\sarc_{\maxim}{\sX \ }}\rceil}}
\end{equation}
is an element of the image of the diagonal ring homomorphism
\begin{equation}
\hgrot{\ssieves {\fld}}_{\lef_{\bullet}} \to \prod_{\sim} \hgrot{\ssieves {\fld}}_{\lef_{\bullet}}
\end{equation}
where $\sim$ is some 
ultrafilter.

Given any motivic site $\mot$, there is a surjective ring homomorphism 
\begin{equation}
\hgrot{\mot}\to \grot{\mot}
\end{equation}
 Thus, if $\sX{}$ is measurable up to $\mot$-homotopy, then it is measurable.

\end{document}